\newcounter{contador}
\newtheorem{propo}[contador]{Proposition}
\newtheorem{teo}[contador]{Theorem}
\newtheorem{lem}[contador]{Lemma}
\newtheorem{nota}[contador]{Remark}
\newtheorem{corol}[contador]{Corollary}
\newcommand{\rec}{\noindent} 
\renewcommand{\qed}{\, \hfill\rule[-1mm]{2mm}{3.2mm}} 
\newcommand{\Qm}{{\mathbb{Q}^+}}
\newcommand{\K}{{\mathbb K}}
\newcommand{\R}{{\mathbb R}}
\newcommand{\N}{{\mathbb N}}
\newcommand{\Q}{{\mathbb Q}}
\newcommand{\Z}{{\mathbb Z}}
\title{Rational Periodic Sequences for the Lyness
Recurrence}
\author{Armengol Gasull$^{(1)}$ V\'{\i}ctor Ma\~{n}osa $^{(2)}$ and
Xavier Xarles$^{(1)}$
\\*[.1truecm]
{\small \textsl{$^{(1)}$ Dept. de Matem\`{a}tiques, Facultat de
Ci\`{e}ncies,}}
\\*[-.25truecm] {\small \textsl{Universitat Aut\`{o}noma de Barcelona,}}
\\*[-.25truecm] {\small \textsl{08193 Bellaterra, Barcelona, Spain}}
\\*[-.25truecm] {\small \textsl{gasull@mat.uab.cat, xarles@mat.uab.cat}}
\\*[-.25truecm]
\\*[-.25truecm] {\small \textsl{$^{(2)}$ Dept. de Matem\`{a}tica Aplicada III (MA3),}}
\\*[-.25truecm] {\small \textsl{Control, Dynamics and Applications Group (CoDALab)}}
\\*[-.25truecm] {\small \textsl{Universitat Polit\`{e}cnica de Catalunya (UPC)}}
\\*[-.25truecm] {\small \textsl{Colom 1, 08222 Terrassa, Spain}}
\\*[-.25truecm] {\small \textsl{victor.manosa@upc.edu}}}
\date{}
\begin{document}
\maketitle


\begin{abstract} Consider the celebrated Lyness recurrence $
x_{n+2}=(a+x_{n+1})/x_{n}$ with $a\in\Q$. First we prove that there
exist initial conditions and values of $a$ for which it generates
periodic sequences of rational numbers with prime periods
$1,2,3,5,6,7,8,9,10$ or $12$ and that these are the only periods
that rational sequences $\{x_n\}_n$ can have. It is known that if we
restrict our attention to positive rational values of $a$ and
positive rational initial conditions the only possible periods are
$1,5$ and $9$. Moreover  1-periodic and 5-periodic sequences are
easily obtained. We prove that  for infinitely many positive values
of $a,$ positive 9-period rational sequences  occur. This last
result is our main contribution and answers an open question left in
previous works of Bastien \& Rogalski and Zeeman. We also prove that
the level sets of the invariant associated to the Lyness map is a
two-parameter family of elliptic curves that is a universal family
of the elliptic curves with a point of order $n, n\ge5,$ including
$n$ infinity. This fact implies that the Lyness map is a universal
normal form for most birrational maps on elliptic curves.
\end{abstract}

\rec {\sl 2000 Mathematics Subject Classification:} \texttt{39A20,
39A11,14H52.}

\rec {\sl Keywords:} Lyness difference equations, rational points
over elliptic curves, periodic points, universal family of elliptic
curves.
\newline

\section{Introduction and main results}

The dynamics of the Lyness recurrence
\begin{equation}\label{Lynesseq}
x_{n+2}=\displaystyle{\frac{a+x_{n+1}}{x_{n}}},
\end{equation}
specially when  $a>0,$ has focused the attention of many researchers
in the last years and it is now completely understood in its main
features after the independent research of Bastien \& Rogalski
\cite{BR} and Zeeman \cite{Z}, and the later work of Beukers \&
Cushman \cite{BC}. See also \cite{BGS,ER}. In particular all
possible periods of the recurrences generated by \eqref{Lynesseq}
are known and for any $a\notin\{0,1\}$ infinitely many different
prime periods appear.

However there are still some open problems concerning the dynamics
of rational points. With the computer experiments in mind, and
following \cite{BR,Z}, it is interesting to know the existence of
rational periodic sequences. The Lyness map
\begin{equation}\label{map} F_a(x,y)=(y,(a+y)/x),
\end{equation}
associated to \eqref{Lynesseq}, leaves invariant the  elliptic
curves\footnote{For some concrete  values of $h$ and $a$ the curve
is not elliptic. We study these values separately.}
\[
C_{a,h}:=\{(x+1)(y+1)(x+y+a)-hxy=0\}
\]
and the map action can be described in terms of the group law action
of them. In consequence several tools for studying the rational
periodic orbits on them are available. In particular, from Mazur's
Torsion Theorem (see for example \cite{ST}), we know that, under the
above hypotheses, the rational periodic points can only have (prime)
periods $1,2,\ldots,9,10$ and $12.$ Our first result proves that
almost all these periods appear for the Lyness recurrence for
suitable $a\in\Q^+$ and rational initial conditions.

\begin{teo}\label{teo}
For any $n\in\{1,2,3,5,6,7,8,9,10,12\}$ there are
$a\in\Q^+\cup\{0\}$ and rational initial conditions $x_0,x_1$ such
that the sequence generated by \eqref{Lynesseq} is $n$-periodic.
Moreover these values of $n$ are the only possible prime periods for
rational initial conditions and $a\in\Q$.
\end{teo}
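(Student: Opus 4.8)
The plan is to split the statement into two parts: the \emph{existence} of $n$-periodic rational sequences for each $n\in\{1,2,3,5,6,7,8,9,10,12\}$, and the \emph{completeness} claim that no other prime period is possible for rational data and $a\in\Q$. The completeness direction I expect to be the easier conceptual step, since the excerpt has already invoked Mazur's Torsion Theorem: a rational periodic point of $F_a$ lying on a curve $C_{a,h}$ that is genuinely elliptic corresponds (once we fix a rational base point, e.g.\ using that $F_a$ acts by translation by a rational point $P$ in the group law) to a rational torsion point, whose order is restricted to $\{1,\dots,10,12\}$ by Mazur. So the real work for completeness is (i) to recall/cite the precise dictionary between the dynamics of $F_a$ on $C_{a,h}$ and translation in the Mordell--Weil group, so that $n$-periodicity of the sequence is exactly ``$P$ has order $n$'', and (ii) to handle the finitely many exceptional $(a,h)$ for which $C_{a,h}$ degenerates (is singular or reducible): there one analyzes the rational points on the rational or nodal/cuspidal curve directly and checks that no new periods arise. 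I would also separately dispose of the trivial degenerate values $a\in\{0,1\}$ and cases like $x_n=0$ appearing (forbidden since we divide by $x_n$), and the fixed-point period-$1$ locus.

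For the existence half, the uniform approach is: for each target period $n$ in the list, exhibit an explicit rational value of $a$ and a rational initial pair $(x_0,x_1)$ and simply verify by direct iteration of \eqref{Lynesseq} that the sequence closes up with prime period exactly $n$. For the very small periods this is immediate: $n=1$ from the fixed point $x_0=x_1=x$ with $x^2-x-a=0$ (choose $a$ so this has a rational root, e.g.\ $a=2$, $x=2$); $n=2$ from the known $2$-cycle condition; $n=5$ is classical and occurs for \emph{every} $a$ on suitable initial conditions (the Lyness $5$-cycle $x,y,(a+y)/x,\dots$), so pick $a\in\Q^+$; similarly small $n$ like $3,6$ can be found by solving a low-degree rational system. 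For the ``new'' periods this is more delicate, but the paper's later sections give the elliptic-curve machinery: one wants, for a well-chosen $a$, the curve $C_{a,h}$ to carry a rational point of the prescribed order $n$, and then translate that back into rational initial conditions $(x_0,x_1)$. I would phrase the proof as: produce a table of witnesses $(n, a, x_0, x_1)$ and remark that checking periodicity is a finite computation, relegating the search for these witnesses to the elliptic-curve analysis (the universal family / modular curve $X_1(n)$ perspective announced in the abstract guarantees such rational points exist for $n$ in the admissible range, and for $n\le 12$ in the list $X_1(n)$ has positive-rank or genus-$0$ behavior providing infinitely many or at least one rational point).

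Concretely, I would organize the existence proof as follows. First, set up the correspondence once and for all: a point $(x_0,x_1)\in C_{a,h}$ (with $h$ determined by $(x_0,x_1)$ via $h=\frac{(x_0+1)(x_1+1)(x_0+x_1+a)}{x_0x_1}$) generates an $n$-periodic sequence iff $F_a^{n}(x_0,x_1)=(x_0,x_1)$, and via the group law this is iff the translation vector has order dividing $n$. Second, for each $n$ in the list, it suffices to find one $(a,h)\in\Q^2$ with $C_{a,h}$ elliptic and possessing a rational point of exact order $n$ in the correct position; such $(a,h)$ are parametrized by rational points on a modular-type curve, which for these $n$ is rational or has infinitely many rational points. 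Third, for the periods $1,5,9$ — the only ones compatible with $a>0$ and positive initial conditions — I would note $1$ and $5$ are elementary and $9$ is exactly the hard case, postponed to the dedicated theorem on $9$-periodic positive sequences cited in the abstract; for the remaining periods $2,3,6,7,8,10,12$ the witnessing $a$ can be taken in $\Q$ (some negative), consistent with the statement which only asks $a\in\Q^+\cup\{0\}$ for the sequence to be $n$-periodic — so I would double-check each witness actually has $a\ge 0$, adjusting by the $SL_2$-type symmetries of the Lyness family if needed.

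The main obstacle I anticipate is the simultaneous demand that the witnessing parameter satisfy $a\in\Q^+\cup\{0\}$ \emph{and} that the period be \emph{exactly} $n$ (not a proper divisor): ruling out a smaller period means checking the order is precisely $n$, i.e.\ the relevant torsion point is not killed by any $n/p$. For composite $n\in\{6,8,10,12\}$ this requires verifying non-vanishing of a few division polynomials at the chosen point, which is a finite but genuinely case-by-case check. The secondary obstacle is the exceptional-fiber bookkeeping in the completeness direction — making sure the finitely many $(a,h)$ where $C_{a,h}$ is singular or reducible, together with $a\in\{0,1\}$, do not secretly produce a periodic rational orbit of some forbidden prime period; this is handled by explicitly parametrizing those degenerate curves and reading off that the induced map is conjugate to $z\mapsto \lambda z$ on $\mathbb{G}_m$ or an additive shift on $\mathbb{G}_a$, whose rational periodic points have period $1$ or $2$ only.
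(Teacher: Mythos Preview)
Your overall architecture---split into existence via explicit witnesses and completeness via Mazur plus degenerate-fiber analysis---matches the paper's. However, there are three concrete errors that would prevent your proof from going through as written.

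\textbf{The exclusion of period $4$ is missing.} Mazur's theorem allows torsion orders $1,\dots,10,12$, so it does \emph{not} by itself exclude $n=4$; yet $4$ is absent from the theorem's list. This requires a separate argument. In the paper's setup, $F_a$ acts on each elliptic $C_{a,h}$ as translation by the fixed point $Q=[1:0:0]$, and one computes explicitly that $4Q=[-a:(ah-a+1)/(a-1):1]\ne\mathcal{O}$ for all rational $a,h$ with $C_{a,h}$ elliptic. You never address this, so your completeness argument actually proves only that the period lies in $\{1,\dots,10,12\}$, which is weaker than the theorem.

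\textbf{Your claim about period $5$ is false.} You write that the $5$-cycle ``occurs for every $a$ on suitable initial conditions''. It does not: the classical Lyness $5$-cycle is the special case $a=1$, for which the map is globally $5$-periodic. On an elliptic $C_{a,h}$ with $a\ne 1$, the condition $5Q=\mathcal{O}$ forces $a=1$. So your proposed witness for $n=5$ would fail unless you take $a=1$.

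\textbf{Your degenerate-fiber analysis is wrong in its conclusion.} You assert that on the singular/reducible fibers the induced map is conjugate to $z\mapsto\lambda z$ on $\mathbb{G}_m$ or a shift on $\mathbb{G}_a$, with rational periodic points of period $1$ or $2$ only. This is not what happens: for $h=0$ the curve $C_{a,0}$ is three lines permuted cyclically by $F_a$, producing period $3$; for $h=a-1$ the line $x+y+1=0$ and the hyperbola are swapped, and for $a=1/2$ (resp.\ $a=2/3$) one obtains continua of period-$8$ (resp.\ period-$12$) points; and $a=0$ is globally $6$-periodic. The actual list of periods on the degenerate fibers is $\{1,2,3,6,8,12\}$ (this is the content of the paper's Lemma on the non-elliptic case). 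Fortunately all of these lie in the allowed set, so your completeness conclusion survives---but your reasoning does not, and in fact periods $2$ and $3$ occur \emph{only} on degenerate fibers (since $2Q,3Q\ne\mathcal{O}$ on the elliptic ones), so your ``only period $1$ or $2$'' heuristic would also obstruct finding the period-$3$ witness $(-1,-1)$.
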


Notice that the value $n=4$ is the only one in Mazur's list that is
not in the list given in the theorem. Following \cite{Z} it is
possible to interpret  that this period corresponds to the case
$a=\infty,$ see Remark~\ref{infinity} in Section~\ref{totq}.

Concerning the rational periodic points in $\Qm\times\Qm$ for the
Lyness map with $a>0,$ it is proved in \cite{BR} that it only can
have periods $1, 5$ and $9.$

Taking $a=n^2-n$ and $x_0=x_1=n\in\N$ we obtain trivially 1-periodic
integer sequences. The existence of positive rational periodic
points of period 5 is well known and simple: they only exist when
$a=1$ and in this case all rational initial conditions give rise to
them because the recurrence \eqref{Lynesseq} is globally 5-periodic.
On the other hand, as far as we know, the case of period 9 has
resisted all previous analysis. In particular, the
Conjecture\footnote{Conjecture 1 of Zeeman was about the
monotonicity of certain rotation number function associated to the
invariant ovals of the Lyness map and was proved in \cite{BC}.}~2 of
Zeeman, \cite{Z} says that there are no such points and their
existence is left in Problem\footnote{Problem 1 of \cite{BR} is
about rational 5-periodic points and it is recalled and solved in
Section~\ref{periode5}.} 1 bis of \cite{BR} as an open question.

We prove that there are some values of $a$ for which the Lyness
recurrences \eqref{Lynesseq} have positive rational periodic
sequences of period 9 and, even more, that this happens for
infinitely many values of $a,$ see more details in Theorem
\ref{teoA}.

It is known, see again \cite{BR,Z}, that the periodic points of
period 9 of $F_a$ are on the elliptic curve
\begin{equation}\label{9p}
a(x+1)(y+1)(x+y+a)-(a-1)(a^2-a+1)xy=0,\end{equation} and they have
positive coordinates only when $a>a_*\simeq 5.41147413$ where
$a_{*}$ is the biggest root of ${a}^{3}-6\,{a}^{2}+3\,a+1=0,$ see
also Subsection~\ref{grouplaw}.

By using MAGMA (\cite{BCP})\footnote{It is also possible to use SAGE
(\cite{Sage}).}, and after several trials, we have found some
positive rational points on the above curve proving that the
Zeeman's Conjecture~2 has a negative answer. The simplest one that
we have obtained is $(a;x,y)=(7;3/2,5/7)$. Notice that the
sequence~\eqref{Lynesseq}, taking $a=7$ and the initial condition
$x_0=3/2$, $x_1=5/7,$ gives
\[
\frac 32,\, \frac 57,\, \frac{36}7,\,
17,\,\frac{14}{3},\,\frac{35}{51},\,\frac{28}{17},\,\frac{63}5,\,\frac{119}{10},\,
\frac32,\,\frac57,\,\ldots
\]
Other positive rational points that we have found are
\[
\left(11;\frac{29}{82},\frac{19}{22}\right),\quad\left(13;\frac
{1584676}{61133}, \frac{335937}{856427}\right),\quad \left(19; \frac
{4259697} {16150}, \frac {5178617} {168283}  \right)
\]
%
and many others with much bigger entries. Our main result proves
that there are infinitely many positive rational values of the
parameter $a$ giving rise to $9$--periodic positive rational orbits.

\begin{teo}\label{teoA}
There are infinitely many values $a\in\Qm$ for which there exist
initial conditions $x_0(a),x_1(a)\in \Qm$ such that the sequence
given by the Lyness recurrence \eqref{Lynesseq} is $9$--periodic.
Furthermore, the closure of these values of $a$ contains the real
interval $[a_1,+\infty)$, where $a_1\simeq 5.41147624$ is the
biggest root of $a^3-\frac{2019}{529}a^2-\frac{777}{92}a-1.$
\end{teo}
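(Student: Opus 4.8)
The plan is to parametrize the 9-periodic points. The periodic points of period 9 lie on the curve \eqref{9p}, which for each value of $a$ is an elliptic curve $E_a$ (generically); the 9-periodicity corresponds to the existence of a rational point of order 9 on $E_a$ with respect to the group law induced by $F_a$, or equivalently the point $P_a=(x_0,x_1)$ generating a cyclic subgroup of order 9. Since the universal family of elliptic curves with a point of order 9 is itself a curve of genus $0$ — more precisely, the modular curve $X_1(9)$ has genus $0$ — there should be a rational parametrization. So the first step is to produce a one-parameter rational family $a=a(t)$, $x_0=x_0(t)$, $x_1=x_1(t)$ with $t$ ranging over $\Q$ (or an open subset), such that for each such $t$ the Lyness sequence with parameter $a(t)$ and initial conditions $x_0(t),x_1(t)$ is 9-periodic. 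Concretely, I would impose the condition $F_{a}^{9}(x_0,x_1)=(x_0,x_1)$ (or better, that $(x_0,x_1)$ is a point of exact order $9$ on $C_{a,h}$ with the appropriate $h$, using the group-law description from Subsection~\ref{grouplaw}), reduce using the known structure, and solve; the genus-$0$ fact guarantees this yields a rational curve that can be uniformized.

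The second step is to show that infinitely many of the resulting values $a(t)$ are positive and that the corresponding $x_0(t),x_1(t)$ are positive. For this I would analyze the parametrizing functions $a(t),x_0(t),x_1(t)$ near the parameter value $t=t_\infty$ corresponding to $a=\infty$: as $t\to t_\infty$, we should have $a(t)\to+\infty$, and by continuity there is an interval of $t$ near $t_\infty$ on which $a(t)$ is large and positive and $x_0(t),x_1(t)>0$. Picking rational $t$ in that interval — of which there are infinitely many — gives infinitely many admissible $a\in\Qm$. To get the precise bound, I would note that positivity of the orbit forces $a>a_*$ (the bound recalled in the excerpt, $a_*\simeq 5.411474$), and then determine the exact range of $a(t)$ over the relevant component of the parametrizing curve; the endpoint $a_1\simeq 5.41147624$, being the largest root of an explicit cubic $a^3-\frac{2019}{529}a^2-\frac{777}{92}a-1$, must come out as the infimum of $a(t)$ on the branch where all of $a,x_0,x_1$ are positive — likely where the parametrization degenerates (a point $t$ where the curve $C_{a,h}$ fails to be smooth, or where $x_0$ or $x_1$ hits $0$ or $\infty$). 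I would then show $a(t)$ takes every sufficiently large value, hence the closure of $\{a(t):t\in\Q, \text{orbit positive}\}$ contains $[a_1,+\infty)$; density of $a(t)$ in this interval follows from continuity of $a(\cdot)$ together with density of $\Q$ in the $t$-interval.

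The main obstacle I expect is the explicit construction of the rational parametrization in the first step: although $X_1(9)$ has genus $0$ so a rational uniformization exists in principle, one must actually exhibit it in the Lyness coordinates $(a;x,y)$ and verify that it is defined over $\Q$ with the right positivity behavior. The sample points listed before the theorem — $(7;3/2,5/7)$, $(11;29/82,19/22)$, and the larger ones — are presumably specializations of this parametrization, so one strategy is to fit a rational curve through these points on the surface cut out by \eqref{9p} (together with the order-9 condition) and then prove the fitted curve lies entirely on the locus. A secondary difficulty is bookkeeping the degenerate values of $a$ (the non-elliptic fibers flagged in the footnote) and the points where $x_0,x_1\in\{0,-1,\infty\}$ so that the Lyness recurrence is well-defined and genuinely 9-periodic (not of smaller period) along the chosen branch; handling these exclusions is routine but must be done carefully to pin down the exact constant $a_1$ and the cubic it satisfies.
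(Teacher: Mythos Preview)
Your proposal rests on a misidentification of which point has order~9. In the Lyness setting the map $F_a$ acts on $C_{a,h}$ as $P\mapsto P+Q$ with $Q=[1:0:0]$; hence $F_a^9(P)=P$ for \emph{every} point $P$ on the curve as soon as $9Q=\mathcal{O}$, and this last condition is what singles out the curve~\eqref{9p}. The initial condition $(x_0,x_1)$ is therefore an \emph{arbitrary} rational point on that elliptic curve, not a point of order~9. The genus-$0$ fact for $X_1(9)$ only tells you that the base of the family~\eqref{9p} is rationally parametrized (and indeed $a$ itself is the parameter); it says nothing about the existence of a rational section $a\mapsto(x_0(a),x_1(a))$ away from the nine torsion sections, none of which lie in $\Qm\times\Qm$. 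What you would actually need is a non-torsion $\Q(a)$-point on the generic fibre of~\eqref{9p}, and your argument does not produce one. The sample points $(7;3/2,5/7)$, $(11;29/82,19/22)$, \dots are not specializations of a common rational section; they were found by direct search, and for $a=6,8$ the paper shows the fibre has rank~$0$, so no positive rational $(x_0,x_1)$ exists at all there.

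The paper's route is quite different. Instead of looking for a section of the elliptic fibration, it slices the surface~\eqref{9p} by the hyperplane $x+y=23/4$ (this value of $S$ is chosen so that the denominator of the resulting expression has a double root in~$a$). The slice is the genus-one curve
\[
k^2=(a-4)\Bigl(a^3-\tfrac{2019}{529}a^2-\tfrac{777}{92}a-1\Bigr),
\]
which is brought to Weierstrass form via Lemma~\ref{lemaxarles}. A non-torsion rational point $R$ is exhibited on its bounded oval; the multiples $kR$ are then dense in the whole real locus of this auxiliary elliptic curve, and projecting to the $a$-coordinate gives a dense set of rational $a$ in $[a_1,+\infty)$, where $a_1$ is precisely the largest real root of the cubic factor above. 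Positivity of $x,y$ for these $a$ is checked directly from $S=23/4$ and the sign of $P=xy$. So the density does not come from a rational parametrization at all, but from the dynamics of translation by a non-torsion point on a fixed auxiliary elliptic curve; this is the missing idea in your plan.
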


Notice that there is a small gap between the values $a_*$ and $a_1$
where we do not know if there are or not rational values of $a$ for
which the Lyness recurrence has positive rational periodic orbits of
period 9. As we will see in the proof of the theorem the gap is
provoked by our approach and it seems to us that it is not intrinsic
to the problem, see the comments in Section~\ref{periode9}, after
the proof of the theorem.

From the classical results of Mordell, see for example
\cite[Ch.VIII]{S}, it is well known that the set of rational points
on an elliptic curve $E$ over $\Q,$ together with the point at
infinity, form an additive group $E(\Q)$ and
\[
E(\Q)\cong \Z^r\times \Phi,
\]
where $r\in\N$ is called the rank of $E(\Q)$ and $\Phi$ is the
torsion of the group. Notice that $r$ is a measure of the amount of
rational points that the curve contains. The torsion part $\Phi$ is
well understood from the results of Mazur already quoted, and it
contains at most 16 points. For the elliptic curve \eqref{9p} it is
easy to check that $\Phi=\Z/9\Z.$ Fixed any $\Phi_0,$ among all the
allowed possibilities, it is not known if the rank of the elliptic
curves having $E(\Q)\cong \Z^r\times \Phi_0,$ for some $r\in\N$, has
an upper bound, but it is believed that it has not (see the related
Conjecture VIII.10.1 in \cite{S}). As far as we know, nowadays when
$\Phi_0= \{0\}$ the highest known rank is greater or equal than~28
while when $\Phi_0=\Z/9\Z $ the highest known rank is~4, see
\cite{W}. We have found values of $a$ for which the algebraic
curve~\eqref{9p} has ranks $0,1,2,3$ or $4$. For instance $0$
appears for $a=6$ and the value 4 happens for $a=408/23.$ A key
point to obtain these results  is the following theorem,
 which extends the results of \cite{Beau}, given for the cases
 of torsion points with order 5 or 6, and proves the universality of
 the level curves invariant for the Lyness map.

\begin{teo}\label{nft}(Lyness normal form)
The family of elliptic curves $C_{a,h}$,
\[
(x+1)(y+1)(x+y+a)-hxy=0,
\]
 together with the points
${\cal O}=[1:-1:0]$ and $Q=[1:0:0]$, is the universal family of
elliptic curves with a point of order $n,$  $n\ge5$ (including
$n=\infty$). This means that for any elliptic curve $E$ over any
field $\K$ (not of characteristic 2 or 3) with a point $R\in E(\K)$
of order $n$, there exist unique values $a_{(E,R)},h_{(E,R)}\in \K$
and a unique isomorphism between $E$ and $C_{a_{(E,R)},h_{(E,R)}}$
sending the zero point of $E$ to ${\cal O}$ and the point $R$ to
$Q$. Moreover, given a finite $n$, the relation between $a$ and $h$
can be easily obtained from the computations made in
Subsection~\ref{grouplaw}.
\end{teo}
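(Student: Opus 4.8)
The plan is to reduce the statement to the classical Tate normal form for an elliptic curve with a marked point, and then to exhibit an explicit change of coordinates identifying the Tate form with the Lyness family $C_{a,h}$. Recall that any elliptic curve $E$ over a field $\K$ of characteristic $\neq 2,3$ together with a point $R$ which is not $2$-torsion or $3$-torsion can be written uniquely in \emph{Tate normal form}
\[
E_{b,c}:\quad Y^2+(1-c)XY-bY=X^3-bX^2,
\]
with the marked point $R$ sent to the origin $(0,0)$ and the zero of the group law sent to the point at infinity; the pair $(b,c)\in\K^2$ is uniquely determined by $(E,R)$, and conversely $E_{b,c}$ is smooth precisely when a certain discriminant polynomial $\Delta(b,c)$ is nonzero. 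Having a point $R$ of order $n\geq 5$ (or of infinite order) guarantees $R$ is neither $2$- nor $3$-torsion, so this normal form applies in exactly the generality claimed. Thus it suffices to produce a birational (in fact biregular, after projectivizing) transformation between $C_{a,h}$ and $E_{b,c}$ carrying $({\cal O},Q)=([1:-1:0],[1:0:0])$ to (infinity, origin), together with an explicit bijective correspondence $(a,h)\leftrightarrow(b,c)$ on the parameter spaces.

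First I would analyze the two marked points on $C_{a,h}$ intrinsically: check that ${\cal O}=[1:-1:0]$ is a smooth point of the projective cubic and take it as the origin of the group law, and then verify that $Q=[1:0:0]$ is also smooth and compute the order of $Q$ in the group $(C_{a,h},{\cal O})$. The computation in Subsection~\ref{grouplaw} (which I may assume) gives the relation between $a$ and $h$ forcing $Q$ to have any prescribed finite order $n$; the point here is the complementary statement that, for \emph{generic} $(a,h)$, the point $Q$ has infinite order, and in all cases $Q$ is never $2$- or $3$-torsion, so that $C_{a,h}$ with the pair $({\cal O},Q)$ is a legitimate "elliptic curve with a marked non-$2,3$-torsion point." Next I would write down the explicit affine change of variables. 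Starting from the Lyness cubic $(x+1)(y+1)(x+y+a)-hxy=0$, one moves $Q=[1:0:0]$ to the origin and ${\cal O}$ to infinity by a linear fractional substitution in $(x,y)$, then completes to Weierstrass form by the standard Nagell-type algorithm (using the tangent line at the origin to eliminate terms), arriving at a cubic of the shape $Y^2+a_1XY+a_3Y=X^3+a_2X^2$; matching coefficients yields $b=b(a,h)$ and $c=c(a,h)$ as explicit rational functions. I would then check this map $(a,h)\mapsto(b,c)$ is invertible with rational inverse, so that the parametrizations of the two families agree, and that the smoothness loci correspond, i.e.\ the exceptional values of $h$ (where $C_{a,h}$ degenerates) match the zero set of $\Delta(b,c)$.

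Finally, uniqueness: given $(E,R)$, existence of \emph{some} $(a,h)$ and isomorphism follows by composing the Tate-form isomorphism $E\xrightarrow{\sim}E_{b,c}$ with the inverse of the explicit map $C_{a,h}\xrightarrow{\sim}E_{b,c}$; and \emph{uniqueness} follows because any isomorphism of elliptic curves sending $0\mapsto 0$ and $R\mapsto Q$ must, after transport to Weierstrass coordinates, be the identity on the (unique) Tate form, which pins down $(b,c)$ hence $(a,h)$ and the isomorphism itself. For the $n=\infty$ case one simply drops the relation between $a$ and $h$ and observes that the generic member of the family already does the job. The main obstacle I anticipate is purely computational bookkeeping: carrying out the Nagell reduction cleanly enough to see that the resulting $(b,c)$ is a \emph{bijective} rational reparametrization of $(a,h)$ (not merely dominant), and keeping track of which special $(a,h)$ values must be excluded for smoothness — i.e.\ verifying that the two "bad loci" really coincide rather than one strictly containing the other. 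There is no deep arithmetic input; the content is that the Lyness family, with its two natural marked points, is a disguised copy of the Tate normal form.
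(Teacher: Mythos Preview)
Your proposal is correct and follows essentially the same route as the paper: reduce to the classical Tate normal form $Y^2+(1-c)XY-bY=X^3-bX^2$ for an elliptic curve with a marked non-$2,3$-torsion point, and then exhibit an explicit change of coordinates between that form and $C_{a,h}$. The paper's proof is more terse than your plan: rather than carrying out a Nagell-type reduction, it simply writes down the linear projective substitution $X=-\tfrac{b}{c+1}z$, $Y=-\tfrac{bc}{c+1}(y+z)$, $Z=-\tfrac{c}{c+1}(x+y)-z$ together with the parameter relations $h=-b/c^2$, $a=(c^2+c-b)/c^2$, and declares the result; your more careful outline of bijectivity, smoothness loci, and uniqueness fills in what the paper leaves implicit.
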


From a dynamical viewpoint the above result implies that the Lyness
Map is an affine model for most birrational maps on elliptic curves.
This result is similar to the one described by   Jogia, Roberts and
Vivaldy in \cite[Theorem 3]{JRV}, where they use the Weierstrass
normal form. Moreover, notice that  in our result the sum operation
takes the extremely easy form $\tilde{F}([x:y:z])=[xy:az^2+yz:xz]$
(see Section \ref{grouplaw}).

Once for some value $a\in\Q^+$ a periodic orbit in $\Q^+\times\Q^+$
of period $9$ for $F_a$ is obtained, it is not difficult to obtain
infinitely many different 9-periodic orbits by using the group law
on the curve. For instance, for a given $a,$ we know that if a point
$P=(x,y)$ is on \eqref{9p} then $(2k+1)P, k\in\Z$ is also on it. In
particular for $a=7$ if we write $3P=(z,w)$ we get
\[
z=\frac{2(6727x+913y+913)(90272x-415y-2905)}
{(5583410x^2+858451819xy-28403347y^2-187465799x-227226776y-198823429)}
\]
and $w$ can be obtained for instance by plugging $x=z$ in \eqref{9p}
and choosing a suitable solution~$y.$ Taking $P=(3/2,5/7)$ we find
\[
3P=\left(\frac{260143588}{23256135},\frac{337001111}{246029869}\right),
\]
which also gives rise to a 9-periodic orbit in $\Q^+\times\Q^+$.
Taking now $3P$ as starting point for the procedure we obtain a
new initial condition for a 9-periodic orbit:
\[
9P=\left(
\frac{3147471926986755321149021}{226091071032606625830925},\frac
{891522142852888213265718}{85174628288506877231975}\right),
\]
and so on. In general we have the following result, see
Subsection~\ref{grouplaw}.

\begin{propo}\label{propob}
If $a\in\Qm$ is a value for which there exist an initial condition
$(x_0,x_1)\in \Qm\times\Qm$ such that the sequence~\eqref{Lynesseq}
is $9$--periodic, then there exist infinite many different rational
initial conditions giving rise to $9$--periodic sequences. Moreover
the points corresponding to these initial conditions fill densely
the elliptic curve (\ref{9p}).
\end{propo}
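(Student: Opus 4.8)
The plan is to exploit the group structure on the elliptic curve~\eqref{9p} together with the fact, recalled in Subsection~\ref{grouplaw}, that the Lyness map $F_a$ acts on $C_{a,h}$ essentially as translation by the point $Q$ of order $9$. The starting observation is that if $(x_0,x_1)\in\Qm\times\Qm$ generates a $9$--periodic sequence for the given $a$, then the associated point $P=(x_0,x_1)$ lies on the curve~\eqref{9p} and, being a rational point, it gives a nontrivial element of $E(\Q)$ where $E$ denotes the elliptic curve~\eqref{9p}. Since the torsion subgroup of this curve is exactly $\Z/9\Z$, the point $P$ is either a torsion point (one of the finitely many $9$-division points) or a point of infinite order. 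In the latter case $E(\Q)$ has positive rank, so it contains infinitely many rational points, and one already gets infinitely many candidate initial conditions; the first real issue is to produce infinitely many rational points that actually correspond to $9$--periodic \emph{positive} orbits.

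The second step handles this positivity and periodicity bookkeeping. Every rational point $R\in E(\Q)$ of the form $R=P+kQ$ with $k\in\{0,1,\dots,8\}$ lies on the same orbit of $F_a$ as $P$ (up to the $F_a$-dynamics being translation by $Q$), hence yields a $9$--periodic sequence; and among the cosets $P+\langle Q\rangle$, the odd multiples $(2j+1)P$ land in the coset of $P$ modulo the order-$9$ subgroup precisely when the relevant index computation works out, which is why the text writes $(2k+1)P$. I would make this precise: the set $\{(2j+1)P : j\in\Z\}$ is infinite (since $P$ has infinite order), all these points are rational, and each of them reduces mod $\langle Q\rangle$ to the class of $P$, so each generates a $9$--periodic rational sequence. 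The positivity of the coordinates for infinitely many of these points follows because the real locus of~\eqref{9p} for $a>a_*$ contains an oval on which all points have positive coordinates, the group translation by $Q$ preserves this oval (as $Q$ is in the identity component or rotates within the oval), and the rational points of infinite order are dense on the real curve by a standard equidistribution/density argument for rational points of infinite order on an elliptic curve over $\R$; hence infinitely many of the $(2j+1)P$ fall on the positive oval.

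The density statement is then the natural strengthening: the subgroup generated by $P$ (a point of infinite order) is dense in the connected component of the identity of $E(\R)$, and translating by the torsion point $Q$ we cover the corresponding density statement on each component; intersecting with the positive oval and with the coset of $P$ shows that the initial conditions arising from $9$--periodic orbits are dense on the curve~\eqref{9p}. I expect the main obstacle to be the positivity control: one must verify carefully that translation by $Q$ (equivalently, the action of $F_a$) genuinely maps the positive oval to itself for $a>a_*$, and that $P$ of infinite order forces a dense, hence cofinitely positive along a subsequence, set of multiples — this is where one invokes $a>a_*\simeq 5.41147413$ and the description of the real components of~\eqref{9p}. The algebraic manipulations (writing $3P$, $9P$ explicitly as in the examples for $a=7$) are routine once the group law from Subsection~\ref{grouplaw} is in hand and need not be reproduced in general.
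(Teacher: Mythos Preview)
Your argument rests on a genuine misunderstanding of which points on the curve~\eqref{9p} yield $9$--periodic orbits. You write that the odd multiples $(2j+1)P$ ``reduce mod $\langle Q\rangle$ to the class of $P$'' and appear to believe that only points in the coset $P+\langle Q\rangle$ produce $9$--periodic sequences. This is doubly wrong. First, $(2j+1)P\equiv P\pmod{\langle Q\rangle}$ would force $2jP$ to be $9$--torsion, which is impossible for $P$ of infinite order and $j\ne0$. Second, and more importantly, \emph{every} rational point $R$ on~\eqref{9p} gives a $9$--periodic orbit: on this particular level curve $Q$ has exact order $9$, and since $F_a$ is translation by $Q$ the orbit $R,\,R+Q,\ldots,R+8Q$ has period exactly $9$ regardless of the coset of $R$. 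The paper never invokes cosets; the expression $(2k+1)P$ appearing just before the proposition refers to the \emph{real component}: $P$ lies on the bounded oval, which is the non-identity component of $E(\R)$, and the odd multiples are precisely those that remain on it.

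Two further gaps. You do not actually exclude the possibility that $P$ is torsion; the paper handles this by observing that the nine torsion points $kQ$ are listed explicitly in Subsection~\ref{grouplaw} and none lies in $\Qm\times\Qm$. And your density argument is inverted: you assert that $\langle P\rangle$ is dense only in the identity component and then translate by $Q$ to reach the other one. But $Q=[1:0:0]$ lies on the identity component (the unbounded one containing ${\cal O}$), so translation by $Q$ does not switch components. The correct argument, which is the one the paper uses, is that $P\in\Qm\times\Qm$ sits on the bounded oval, i.e.\ on the \emph{non}-identity component, and for a point of infinite order on that component the set $\{kP\}_{k\in\Z}$ is already dense in \emph{all} of $E(\R)$ via the isomorphism $E(\R)\cong\SSS^1\times\{\pm1\}$. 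Finally, your careful discussion of positivity is unnecessary here: the proposition only asks for rational, not positive rational, initial conditions.
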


Observe that such a point $(x_0,x_1)\in \Qm\times\Qm$ always gives
rise to a point on the elliptic curve given by \eqref{9p} which is
not a torsion point, because all the $9$-torsion points are not in
$\Q^+\times\Q^+,$ see   again Subsection~\ref{grouplaw}.

The case of 5-periodic points is studied in Section~\ref{periode5},
proving that for $a=1$ there are some elliptic curves $C_{1,h},
h\in\Q,$  with rank $0$ and, as a consequence, ovals without points
with  rational coordinates.

Finally, last section is devoted to give rational values of $a$ for
which the Lyness map $F_a$ has as many periods as possible, of
course with rational initial conditions.

The structure of this paper is the following. In Subsection
\ref{grouplaw} we recall some known results which describe the
action of the Lyness map in terms of a linear translation over
elliptic curves. Subsection~\ref{nf} is devoted to prove
Theorem~\ref{nft} about the Lyness normal form. In
Section~\ref{totq} we prove Theorem~\ref{teo}, while the proof our
main result, Theorem \ref{teoA}, and all our outcomes on rational
9-periodic points of \eqref{map} are presented in Section
\ref{periode9}. Section~\ref{periode5} studies the number of
rational points on the elliptic curves invariant for $F_1$ and the
last section deals with the problem of finding a concrete $F_a$ with
as many periods as possible.

\section{Preliminary results}

\subsection{Lyness recurrence from group law's action
viewpoint}\label{grouplaw} The results of this subsection are well
known, we refer the reader to references \cite{BR,JRV,Z} for
instance to get more details.

As mentioned above, the phase space of the discrete dynamical
system defined by the map \eqref{map} is foliated by the family
curves
$$
C_{a,h}:=\{(x+1)(y+1)(x+y+a)-hxy=0\}.
$$

This family is formed by elliptic curves except for a few values of
$h$. When $h=0$ it is  a product of straight lines; when $h=a-1$,
$a\ne1$, it is the formed  by a straight line and an hyperbola; and
when
\begin{equation}\label{hchc}
h=h^{\pm}_c:=\frac {2\,a^2+10\,a-1 \pm (4\,a+1)\sqrt {4\,a+1}}{2\,a},
\end{equation}
with $h_c^{\pm}\ne a-1,0$, it is a rational cubic, having   an
isolated real singularity. In fact the values $h^\pm_c$ correspond
to the level sets containing the fixed points of $F_a$,
$((1\pm\sqrt{4a+1})/2, (1\pm\sqrt{4a+1})/2)$.

The first quadrant $Q_1=\{(x,y), x>0, y>0\}$ is invariant under the
action of $F_a$ and it is foliated by ovals with energy $h> h^+_c.$

In summary, on most energy levels, $F_a$ is a birational map on an
elliptic curve and therefore it can be expressed as a linear action
in terms of the group law of the curve \cite[Theorem 3]{JRV}.
Indeed, taking homogeneous coordinates on the projective plane
$P\R^2$ the curves $C_{a,h}$ have the form
$$
\tilde{C}_{a,h}:=\{(x+z)(y+z)(x+y+az)-hxyz=0\},
$$
and $F$ can be seen as the map $\tilde{F}([x:y:z])=[xy:az^2+yz:xz]$.
Taking the point ${\cal O}=[1:-1:0]$ as the neutral element, each
elliptic curve $\tilde{C}_{a,h}$ is an abelian group with respect
the sum defined by the usual \textsl{secant--tangent chord process}
(i.e. if a line intersects the curve in three points $P$,$Q$, and
$R$ then $P+Q+R=\cal{O}$). According to these operation the Lyness
map can be seen as the linear action
\begin{equation}\label{accio}
\tilde{F}:P\longrightarrow P+Q,
\end{equation}
where $Q=[1:0:0]$. When $a(a-1)\ne0$ it is not difficult to get that
$2Q=[-1:0:1],$ $3Q=[0:-a:1],$
\begin{align*}
4Q&=\left[-a:\frac{ah-a+1}{a-1}:1\right],\\
5Q&=\left[\frac{ah-a+1}{a-1}:\frac{-a^2-ah+2a-1}{a(a-1)}:1\right],\\
6Q&=\left[\frac{-a^2-ah+2a-1}{a(a-1)}:\frac{a^3-2a^2-ah+2a-1}{a(ah-a+1)}:1\right],\\
7Q&=\left[\frac{-a^2-ah+2a-1}{a(a-1)}:\frac{-a^4h+a^3h+a^3+a^2h-3a^2-ah+3a-1}{a^3h-a^3+a^2h^2-3a^2h+3a^2+2ah-3a+1}
:1\right],
\end{align*}
$-Q=[0:1:0],$ $-2Q=[0:-1:1],$ $-3Q=[-a:0:1],$
\begin{align*}
-4Q&=\left[\frac{ah-a+1}{a-1}:-a:1\right],\\
-5Q&=\left[\frac{-a^2-ah+2a-1}{a(a-1)}:\frac{ah-a+1}{a-1}:1\right],
\end{align*}
see also \cite{BR}.

In terms of the recurrence, the linear action (\ref{accio}) can be
seen as follows: taking the initial conditions $P:=[x_0:x_1:1]$ then
$[x_{n+1}:x_{n+2}:1]=P+(n+1)Q$, where $+$ is the group operation.
Notice also that, from this point of view, the condition of
existence of rational periodic orbits is equivalent to the condition
that $Q$ is in the torsion of the group given by the rational points
of $\tilde{C}_{a,h},$ which, as we have already commented, is
described by Mazur's Theorem.

Hence, from the above expressions of $k Q$  we can obtain the values
of $h$ corresponding to a given period. For instance, for period 9
we impose that $4Q=-5Q$, or equivalently $9Q={\cal O},$ which gives
$-a=(-a^2-ah+2a-1)/(a(a-1))$. From this equality we have that
\[
h=\frac{(a-1)(a^2-a+1)}{a},
\]
which  corresponds to the elliptic curve~\eqref{9p}. For these
values of $h$ the points corresponding to the torsion subgroup are:
\begin{align*} Q=&[1:0:0],\,[-1:0:1],\,[0:-a:1],\, [-a:a(a-1):1],\\
&[a(a-1):-a:1],\, [-a:0:1],\,[0:-1:1],\, [0:1:0],\, {\cal
O}=[1:-1:0].\end{align*}


It is also important from a dynamical point of view the following
well know property of the secant--tangent chord process defined on
any real non-singular elliptic curve $E$. Let $P$ be a point of $E$
such that $kP,$ for $k\in\Z,$ is never the neutral element ${\cal
O}$. Recall that when the elliptic curve is defined over $\Q$ there
are at most sixteen points $P$ with rational entries not satisfying
this property due the Mazur classification of the torsion subgroup
of $E(\Q)$. Then the adherence of the set $\{kP\}_{k\in\Z}$ is:
\begin{itemize}
\item either all the curve $E$, when $P$ belongs to the
connected component  of $E$ which does not contains the neutral
element $\cal O$ ; or
\item the  connected component containing $\cal O$
when $P$ belongs to it.
\end{itemize}
This result is due to the fact that there is a continuous
isomorphism between $E\cup{\cal O}$ with this operation and the
group $\{e^{it}\,:\,t\in[0,2\pi)\}\times \{1,-1\}$, with the
operation $(u,v)\cdot(z,w)=(uz,vw),$ see for instance Corollary
2.3.1 of \cite[Ch. V.2]{S0}. Clearly, by using this construction,
Proposition~\ref{propob} follows.

Notice that when an elliptic curve is given by $C_{a,h}$ the bounded
component never contains~$\cal O$.

\subsection{A new normal form for elliptic curves}\label{nf}

\begin{proof}[Proof of Theorem~\ref{nft}]
It is known that any elliptic curve having a point $R$ that is not a
2 or a 3 torsion point can be written in the so called Tate normal
form
\[
Y^2+(1-c)XY-bY=X^3-bX^2,
\]
where $R$ is sent to $(0,0)$, see \cite[Ch. 4, Sec. 4]{Hu}. Our
proof will follow by showing that the curves $C_{a,h}$ can be
transformed into the ones of the  Tate normal form. In projective
coordinates these curves write as
\[
Y^2Z+(1-c)XYZ-bYZ^2=X^3-bX^2Z
\]
and the curves $C_{a,h}$ as
\[
(x+z)(y+z)(x+y+az)-hxyz=0.
\]
With the change of variables
\[
X=-\frac{b}{c+1}z,\quad Y=-\frac{bc}{c+1}(y+z),\quad
Z=-\frac{c}{c+1}(x+y)-z
\]
and the relations
\[
h=-\frac{b}{c^2},\quad a=\frac{c^2+c-b}{c^2},
\]
both families of curves are equivalent and the theorem follows.
Observe that the case $c=0$ corresponds to a curve with a
$4$--torsion point. As we will see in the proof of
Theorem~\ref{teoA} one advantage of this normal form is that it is
symmetric with respect to $x$ and $y.$
\end{proof}

From Theorem~\ref{nft} we have that all the known results on
elliptic curves with a point of order greater than 4 can be applied
to the corresponding Lyness curves. In particular we find inside
$C_{a,h},$ the curves with high rank and prescribed torsion given in
\cite{W} or we can use the list of ``Elliptic Curve Data" for curves
in Cremona form (\cite{Cr}), taking advantage of the MAGMA or SAGE
softwares that allow to identify a given elliptic curve in it.

\section{Possible periods for rational points}\label{totq}

\subsection{The non-elliptic curves case}\label{nonelli}

As was explained in Subsection~\ref{grouplaw} the curves $C_{a,h}$
are elliptic for all values of $a$ and $h$ except for $h\in
H:=\{0,a-1,h^{\pm}_c\}$, with $h^{\pm}_c$  given in (\ref{hchc}). On the
curves corresponding to  these values  there could be, for the
rational periodic orbits,  some periods that are not in the list
given by Mazur's theorem. In this section we prove that  no new
period appears.

\begin{lem}\label{genere0} The  periods of the rational periodic orbits of $F_a$
lying on the curves $C_{a,h}$ for $h\in H$ are $1,2,3,6,8$ and 12.
\end{lem}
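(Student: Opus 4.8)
The strategy is to analyze each exceptional value $h \in H = \{0, a-1, h_c^{\pm}\}$ separately, in each case describing the curve $C_{a,h}$ explicitly, parametrizing it rationally (since each is either reducible or a rational cubic), and transporting the Lyness dynamics $F_a$ onto this parametrization to read off the possible periods. Since every curve in question is rational (genus 0 or a union of rational components), the dynamics will be conjugate to a Möbius transformation on $\mathbb{P}^1$ — possibly after permuting finitely many components — and a Möbius map over $\mathbb{Q}$ has rational periodic points of only a very restricted set of periods, which we must enumerate.

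First I would dispose of the two "obviously reducible" cases. When $h=0$, the curve is $(x+1)(y+1)(x+y+a)=0$, a union of three lines; $F_a$ permutes these three components, so a rational periodic orbit either cycles among the three lines (contributing a factor of $3$ to the period) or stays on one line, where $F_a$ restricts to an affine or Möbius map of that line. When $h=a-1$ (with $a\neq 1$), the curve splits as a line times a conic (hyperbola), and again $F_a$ either fixes or swaps the two components, contributing a factor of $1$ or $2$, with the residual action on each piece being a Möbius map. In both cases one computes the induced one-dimensional maps explicitly from $F_a(x,y)=(y,(a+y)/x)$ restricted to the component(s), determines their orders/periodic structure, and multiplies by the component-permutation period. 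The case $h=h_c^{\pm}$ is the rational cubic with an isolated singular point (the fixed point of $F_a$): here I would use the standard rational parametrization of a nodal/cuspidal cubic through the singular point, conjugate $F_a$ to the resulting Möbius map on $\mathbb{P}^1(\mathbb{Q})$, and note that since the fixed point of $F_a$ is the singular point, the induced Möbius map fixes the parameter value corresponding to it.

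The key arithmetic input for all the genuinely one-dimensional pieces is: a Möbius transformation $z \mapsto \frac{\alpha z + \beta}{\gamma z + \delta}$ defined over $\mathbb{Q}$ that has a periodic point in $\mathbb{Q}$ of prime period $p$ must be of finite order dividing something small — concretely, its trace-squared-over-determinant ratio must be one of the finitely many values $\{0,1,2,3,4\}$ corresponding to the cases where the associated element of $\mathrm{PGL}_2(\mathbb{Q})$ has order $1,2,3,4,6$ or is parabolic/hyperbolic; the finite-order cases over $\mathbb{Q}$ have orders only in $\{1,2,3,4,6\}$ (a classical fact, essentially the crystallographic restriction for $\mathrm{GL}_2(\mathbb{Q})$). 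Combined with the component-permutation factors ($1$, $2$, or $3$), the attainable periods multiply out to $\{1,2,3,4,6,8,12\}$ — and then one checks that period $4$ (arising, say, as $2 \times 2$ or from an order-$4$ Möbius map) does not actually occur for these specific curves while $8 = 2\times 4$ and $12 = 3 \times 4$ or $4 \times 3$ do, giving exactly the list $1,2,3,6,8,12$ claimed. The main obstacle I anticipate is bookkeeping: correctly pinning down, in each exceptional case, \emph{which} of the a priori possible periods are genuinely realized by \emph{rational} points (as opposed to merely possible for the Möbius map over $\overline{\mathbb{Q}}$ or over $\mathbb{R}$), since the component-swapping can either create or obstruct small periods; this requires a careful case-by-case verification rather than a slick uniform argument, and one must be attentive to the degenerate sub-case $a=1$ (where $h=a-1=0$ collapses two cases together and where the globally $5$-periodic behaviour lives on the elliptic fibers, not these) and to whether $h_c^{\pm}$ can coincide with $0$ or $a-1$.
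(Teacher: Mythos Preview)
Your plan is essentially the paper's own proof: treat each $h\in H$ separately, rationally parametrize the (reducible or nodal) curve, reduce $F_a$ or an appropriate iterate to a M\"obius transformation, and then invoke the finite-order restriction for $\mathrm{PGL}_2(\mathbb{Q})$ (which the paper phrases via Niven's theorem on rational values of $\cos(2\pi q/p)$). The paper carries this out with the explicit maps $f(y)=(1-a)/(y+a)$ for $F_a^3$ restricted to one of the three lines at $h=0$, $f(x)=(-x+a-1)/x$ for $F_a^2$ on the line component at $h=a-1$, and a parametrized M\"obius map on the nodal cubics $h=h_c^{\pm}$ (which turns out to contribute only fixed points).

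Two small corrections to your bookkeeping. First, period $12$ does \emph{not} arise as $3\times 4$ from the $h=0$ case: the induced M\"obius map there has finite order $>1$ over $\mathbb{Q}$ only when $a=0$, and then the order is $2$ (giving period $6$); no rational $a$ makes it order $4$. Period $12$ comes instead from the $h=a-1$ case as $2\times 6$, at $a=2/3$, while $8=2\times 4$ arises at $a=1/2$. Your method would discover this once you actually compute, but your a priori guess was off---precisely the ``careful case-by-case verification'' you anticipated. Second, for $h=0$ with $a\ne 1$ the map $F_a$ permutes the three lines cyclically, so no orbit ``stays on one line''; the factor of $3$ is forced, and the $3$-periodic orbit $(-1,-1)\to(-1,1-a)\to(1-a,-1)$ sits at their pairwise intersections.
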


\begin{proof} It is well known that the case $a=0$ is globally 6-periodic.
So from now on we consider that $a\ne0.$ We start the study of the
rational cubic curves $C_{a,h^{\pm}_c}$, where $h^{\pm}_c$ are given in
(\ref{hchc}) and moreover $h^\pm_c\not\in\{0,a-1\}$. By setting
$b=\pm\sqrt{4a+1}$, we have
$$\frac{(b+ 3)^3}{4(b+1)}=
\begin{cases}h^+_c \quad\mbox{ when }
\quad b\ge0,\quad b\not=1,\\h^{-} _c \quad\mbox{ when }\quad
b\le0,\quad b\notin\{-1,-2,-3\}.
\end{cases}
$$
Note that $a=0$ implies that $b=\pm1$; $h_c^\pm=a-1\ne0$ implies
that $b=-2$; and $h_c^\pm=0$ implies that $b=-3$. Observe also that
since $a$ and $h_c^\pm$ are in $\Q$ then $b\in\Q.$

By using again a computer algebra software we obtain the following
joint parametrization of both curves $C_{a,h^{\pm}_c}$:

$$t\to\left(x(t),y(t)\right)=\left(
{\frac { \left( 3t+tb-2 \right)  \left( 2tb+4t-b-1 \right) }{
2\left( b+1 \right)  \left( t-1 \right) }},-{\frac { \left(
3t+tb-b-1 \right)  \left( 2tb+4t-3-b
 \right) }{2t \left( b+1 \right) }}\right).$$
Moreover
\begin{equation}\label{m}
t=\frac{x(t)-(b+1)/2}{x(t)+y(t)-(b+1)}.
\end{equation}

On each curve $C_{a,h^{\pm}_c}$ the Lyness map $F_a$ can be seen as
$$
\left.F_a\right|_{C_{a,h^{\pm}_c}}:\big(x(t),y(t)\big)\longrightarrow
\left(y(t),\frac{(b^2-1)/4+y(t)}{x(t)}\right)=
\big(x(f(t)),y(f(t))\big),
$$
where $f(t)$ has to be determined. By using \eqref{m} we obtain that
$f$ is the linear fractional transformation
$$
f(t)=\frac{t- (b+ 1)/(2b+ 4)}{t},
$$
where recall that $b\in\Q.$ Therefore  the dynamics of $F_a$ on each
of the curves $C_{a,h^{\pm}_c}$ is completely determined by the dynamics
of the maps $f(t)$.

It is a well-known fact that if a linear fractional map,
$g(t)=(At+B)/(t+D)$, has a periodic orbit of prime period $p$,
$p>1,$ then it is is globally $p$-periodic. Moreover this happens if
and only if either:
\begin{itemize}
\item  $\Delta:=(D-A)^2+4B> 0$ and $A=-D$ and  in this case $g$ is
$2$-periodic; or

\item $\Delta<0$ and $\xi:=(A+D-\sqrt{|\Delta|}\,i)/(A+D+\sqrt{|\Delta|}\,i)$ is a
  primitive $p$-root of  the unity and in this case $g$ is $p$-periodic.
\end{itemize}

   Hence, apart of the fixed points,  the maps $f(t)$ can  have
  $p$-periodic solutions, $p>1$, if and only if
  $$
\Delta=-\frac{b}{b+2}<0 \mbox{ and }
\xi=\frac1{b+1}-\frac{b+2}{b+1}\sqrt{\frac{b}{b+2}}i \mbox{ is a
primitive }p\mbox{-root of the unity.}
  $$

The condition that $\xi$ is a primitive $p$-root of the unity,
implies that
$$
\cos\left(2\pi\frac{q}{p}\right)=\frac{1}{b+ 1}\in\Q, \mbox{ for
some } q\in\Z.
$$

It is also a well-known fact that the only rational values of
$\cos(x)$, where $x$ is a rational multiple of $\pi,$ are $0$,$\pm
1/2$,$\pm 1$ (see \cite[Theorem 6.16]{NZM} or \cite{O}). This fact
implies that $b\in\{-3,-2,0,1\}.$ The only allowed valued is $b=0$,
which implies that $\Delta=0,$ and so the corresponding map $f$ is
not periodic. Hence, only the fixed points of $F_a$ appear on this
family of curves.

Concerning the case $h=0$, observe that
$C_{a,0}=\{(x+1)(y+1)(a+x+y)=0\}$. When $a=1$ there are no periodic
orbits on this level set. When $a\ne1,$  the three straight lines
forming this set are mapped one into the other in cyclical order by
$F_a,$ and so they are invariant under $ F_a^3.$ The cyclical order
determined by $F_a$ is:
$$\{x+1=0\}\to\{a+x+y=0\}\to\{y+1=0\}\to\{x+1=0\}.$$ The
restriction of $F_a^3$ on $\{x+1=0\}$ is given by $$
  F_a^3(-1,y)=\left(-1,\frac{1-a}{y+a}\right).
$$
Hence the dynamics of $F_a^3$ is  determined by the dynamics of the
linear fractional map
$$
f(y)= \frac{1-a}{y+a}.
$$
One of its fixed points corresponds to the continua of three
periodic points of $F_a$ given in Table~1. By using again the
characterization of the periodicity of the linear fractional maps we
obtain that $f$ is periodic only when $a=0.$

It remains  to study the case $h=a-1, a\ne1$. In this situation
$C_{a,a-1}=\{(x+y+1)(a+x+y+xy)=0\}$ and $F_a$ sends the straight
line to the hyperbola and vice versa. So both level sets are
invariant under $ F_a^2.$ The restriction of $F_a^2$ on
$\{x+y+1=0\}$ is given by $$
  F_a^2(x,-1-x)=\left(\frac{-x+a-1}{x},\frac{1-a}{x}\right).
$$
Hence the dynamics of $F_a^2$ is determined by the linear fractional
map
$$
f(x)=\frac{-x+a-1}{x},
$$
which has  fixed points only when $a\ge3/4.$ They give rise to
2-periodic points of $F_a$ when $a>3/4$ and to a fixed point when
$a=3/4.$ Arguing as in the case $h=h_c^\pm$, the map $f$ is
$p$-periodic, $p\ge2,$ only when
  $$
\Delta=4a-3<0 \mbox{ and } \xi=\frac{1-2a-i\sqrt{3-4a}}{2a-2}\mbox{
is a primitive }p-\mbox{root of the unity.}
  $$
This happens if and only if $a<3/4$ and
$(1-2a)/(2a-2)\in\{0,\pm1/2,\pm1\},$ or equivalently when
$a\in\{1/2,2/3\}$ (recall that the case $a=0$ is already
considered). The case $a=1/2$ gives a $4$-periodic map $f$ and
$a=2/3$ a $6$-periodic map. These cases correspond, respectively, to
the existence of continua of $8$ and $12$ periodic points for $F_a$
on $C_{a,a-1}$.
\end{proof}

\subsection{Proof of Theorem~\ref{teo}}

\begin{proof}[Proof of Theorem~\ref{teo}]
From Lemma~\ref{genere0} we know that when $h\in H$ the possible
periods on $C_{a,h}$ are in the list given in the statement. For
those points on the elliptic  curves $C_{a,h}$ for all values of $a$
and $h\notin H$ we can apply Mazur's theorem and we obtain that the
only possible periods are the ones of the statement together with
the period 4. The points of (prime) period
 4 can be discarded  by observing that in Subsection~\ref{grouplaw}
 we prove that $4Q\ne\mathcal{O}.$
It is also possible to perform a direct study with resultants of the
system $F^4_a(x,y)=(x,y)$. From this study we get that that its only
solutions are  the ones corresponding to fix or
 2-periodic  points.

These results together with the ones presented on  Table 1 prove the
theorem.
\end{proof}

\vspace{0.2cm}

\begin{center}
\begin{tabular}{|c||c|c|c|c|}
  \hline
 Period & $a$ & $x_0$ & $x_1$& Comments\\
 \hline
  1 & $u^2-u$ & $u$ & $u$ & $u\in\Q\setminus\{0\}$ \\
 2 & $u^2+u+1$ & $u$ & $-u-1$ & $u\in\Q\setminus\{-1,0\}$\\
 3 & $a$ & $-1$ & $-1$ & $a\in\Q\setminus\{1\}$\\
 5 & 1 & $x_0$ & $y_0$ & Almost for all $x_0$, $y_0$ in $\Q$\\
 6 & 0 & $x_0$ & $y_0$ & Almost for all $x_0$, $y_0$ in $\Q$\\
 7 & $\frac{u^2-1}{2u-1}$& $\frac{u^2-1}{u^2-u+1}$ & $-x_0$ & Almost for all $u$ in $\Q$\\
 8 & $\frac{u^2-1}{u^2+2u-1}$ & $\frac{u^2-1}{u^2+1}$ & $-x_0$ & Almost for all $u$ in $\Q$ \\
 9 & 7 & 3/2 & 5/7 & See also Theorem~\ref{teoA}\\
 10 & 3/2 & -2 & 3/5 & There are infinitely many\\
 12 & 12/13 & -4/9 & -10/13 & There are infinitely many\\
  \hline
\end{tabular}
\vspace{0.5cm} \nobreak\\ Table 1. Examples of rational periodic
sequences for recurrence~\eqref{Lynesseq}. See also
Remark~\ref{taula}.
\end{center}

\vspace{0.2cm}

Next remark collects some comments on the results presented in Table
1.

\begin{nota}\label{taula} (i) The continua of sequences of periods 1 and 2 are not
the most general ones, we have chosen simple one-parameter families.

(ii) It is already known that there exist one-parameter families of
elliptic curves having points of 7 (or 8) torsion and rank 1.  By
using Theorem~\ref{nft}  we know that they should appear also in the
Lyness normal form. Nevertheless, we have got them by a direct
study, only searching points satisfying $x+y=0.$

(iii) The values corresponding to periods 10 and 12 are obtained
with MAGMA. By using similar arguments to the ones used to study the
case of period 9 we can prove the existence of rational periodic
points, with these periods,  for infinitely many rational values of
$a.$ It is not known the existence of a continua of them.
\end{nota}

\begin{nota}\label{infinity}
By introducing  $y_n:=x_n/\sqrt{a}, n\in\N,$ the Lyness recurrence
\eqref{Lynesseq} writes as $
y_{n+2}=\left({1+{y_{n+1}}/{\sqrt{a}}}\right)/{y_n}. $ When $a$
tends to infinity we obtain the recurrence $y_{n+2}=1/y_n$, which is
globally 4-periodic.
\end{nota}

\section{Proof of Theorem~\ref{teoA}}\label{periode9}

In the proof of Theorem \ref{teoA} we will use the following
Lemma, see for example \cite{AM}.

\begin{lem}\label{lemaxarles}
The curve $K^2=A^4+w_2A^2+w_1A+w_0$ is isomorphic to the elliptic
curve
$$
Y^2=X^3-\left(\frac{w_2^2}{48}+\frac{w_0}{4}\right)X+\frac{w_1^2}{64}+\frac{w_2^3}{864}-\frac{w_0w_2}{24},
$$
where the change of variables is given by
$$
X=\frac{1}{2}\left(A^2+K+\frac{w_2}{6}\right),\quad
Y=\frac{A}{2}\left(A^2+K+\frac{w_2}{2}\right)+\frac{w_1}{8}.
$$
\end{lem}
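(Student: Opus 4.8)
The plan is to write down the stated substitution explicitly, check that it admits a rational inverse, and then verify by a short direct computation that it transforms the quartic equation into the displayed Weierstrass equation; a birational map between smooth projective genus-$1$ curves carrying a rational point is an isomorphism of the underlying curves, which then finishes the proof. Note that the hypothesis that $A^4+w_2A^2+w_1A+w_0$ has simple roots (implicit in calling the source curve \emph{the} elliptic curve) guarantees both models are genuinely elliptic, while the denominators $48,864,\dots$ force $\mathrm{char}\,\K\neq 2,3$.

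First I would introduce the auxiliary quantity $u:=A^2+K$, so that the proposed map reads $X=\tfrac12(u+\tfrac{w_2}{6})$ and $Y=\tfrac{A}{2}(u+\tfrac{w_2}{2})+\tfrac{w_1}{8}$. From the first equation $u=2X-\tfrac{w_2}{6}$, hence $u+\tfrac{w_2}{2}=2X+\tfrac{w_2}{3}$ and therefore
\[
Y=A\Bigl(X+\tfrac{w_2}{6}\Bigr)+\tfrac{w_1}{8},\qquad\text{whence}\qquad A=\frac{Y-w_1/8}{X+w_2/6},\qquad K=u-A^2=2X-\tfrac{w_2}{6}-A^2 .
\]
This exhibits the inverse as a rational map, so the correspondence is birational.

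Next I would massage the defining equation. Since $K^2-A^4=(K+A^2)(K-A^2)=u(u-2A^2)$, the relation $K^2=A^4+w_2A^2+w_1A+w_0$ is equivalent to $u^2-2uA^2=w_2A^2+w_1A+w_0$, i.e. to $u^2-w_0-w_1A=(2u+w_2)A^2$. Substituting $u=2X-\tfrac{w_2}{6}$ produces the fortunate simplification $2u+w_2=4(X+\tfrac{w_2}{6})$, so after inserting $A=(Y-w_1/8)/(X+w_2/6)$ the right-hand side becomes $4(Y-w_1/8)^2/(X+w_2/6)$. Multiplying the relation through by $X+\tfrac{w_2}{6}$, the linear term $-w_1Y$ occurs on both sides and cancels; expanding $\bigl((2X-\tfrac{w_2}{6})^2-w_0\bigr)(X+\tfrac{w_2}{6})$ and collecting powers of $X$ — the coefficient of $X^2$ vanishing automatically, which is precisely the purpose of the shift by $w_2/6$ — leaves
\[
4Y^2=4X^3-4\Bigl(\tfrac{w_2^2}{48}+\tfrac{w_0}{4}\Bigr)X+4\Bigl(\tfrac{w_1^2}{64}+\tfrac{w_2^3}{864}-\tfrac{w_0w_2}{24}\Bigr),
\]
which is the asserted equation after dividing by $4$. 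The same chain of equivalences read backwards recovers the quartic from the Weierstrass equation, so the two affine curves correspond birationally via the stated formulas.

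The only real content is the polynomial identity above, and the two devices — using $u=A^2+K$ as a coordinate and translating $X$ by $w_2/6$ — cut it to a few lines, so I do not expect a serious obstacle there. The step most deserving care is the upgrade from ``birational'' to ``isomorphism of elliptic curves'': one observes that the smooth projective model of $K^2=A^4+w_2A^2+w_1A+w_0$ has genus $1$ and carries rational points — the leading coefficient $1$ being a square, there are two rational points over $A=\infty$ — so it is an elliptic curve; a birational map between smooth projective curves extends uniquely to a morphism, and a birational morphism of elliptic curves is an isomorphism of curves. If only a birational equivalence were wanted, this last step could be omitted.
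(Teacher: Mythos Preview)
Your argument is correct: the substitution $u=A^2+K$ linearises the problem, the shift by $w_2/6$ kills the $X^2$-term, and your chain of identities checks out line by line (including the cancellation of the $-w_1Y$ term and the constant $w_1^2/8-w_1^2/16=w_1^2/16$). The remark at the end on upgrading a birational correspondence between smooth projective genus-one curves to an isomorphism is also accurate and appropriately cautious.

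As for comparison with the paper: there is nothing to compare, since the paper does not prove this lemma at all --- it simply quotes the statement and refers to \cite{AM} (Atkin--Morain). Your direct verification is therefore strictly more than what the paper offers, and it is exactly the kind of self-contained check one would want here.
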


\noindent {\it Proof of Theorem \ref{teoA}.} As we have already
explained in the introduction, following \cite{BR,Z} we already know
that all the real \textsl{positive} initial conditions that give
rise to 9-periodic recurrences~\eqref{Lynesseq} correspond to the
points of the  positive oval of the elliptic curves~\eqref{9p},
\[
S_a:=\{(a;x,y)\,:\,a(x+1)(y+1)(x+y+a)-(a-1)(a^2-a+1)xy=0,\\ x>0,
y>0, a>a_*\}.
\]

We want to find infinitely many points $(x(a),y(a),a)\in
(\Qm\times\Qm\times\Qm)\cap S_a$. As we will see bellow, we will
find first one point $(a;x,y)$, satisfying $x+y=23/4,$ and from it
we will construct infinitely many via a multiplication process.

We start by applying the transformation $S=x+y$, $P=xy$ to each
surface $S_a.$ We obtain,
\begin{equation*}
a \left( 1+S+P \right) \left( a+S \right) -\left( a-1 \right)
\left( {a}^{2}-a+1 \right) P=0,
\end{equation*}
or equivalently,
\begin{equation}\label{eqP}
P=\displaystyle{\frac {a \left( 1+S \right) \left( a+S \right)
}{a^3-3a^2+(2-S)a-1}}.
\end{equation}

It is easy to see that if $(S,P)\in \Qm\times\Qm$ and
$\Delta:=S^2-4P$ is a perfect square then the corresponding
$(x,y)\in \Qm\times\Qm.$ So, we want to find a value of $S$ such
that $\Delta$ has a suitable expression that facilitates to find
values of $a$ for which $\Delta$ is a perfect square.

By using \eqref{eqP}, $\Delta=S^2-4P$ writes as
$$
\Delta=\displaystyle{\frac{{S}^{3}a- \left( a^3-3\,a^2-2\,a-1
\right) {S}^{2} + \left( 4\,{a}^{2}+4\,a \right) S
+4\,{a}^{2}}{a^3-3a^2+(2-S)a-1}}.
$$

We will fix a value of $S$ such that the value of the discriminant
with respect $a$ of the denominator of the last expression
vanishes. The discriminant is $(4S-23)(S+1)^2$, so we fix
$S=23/4$, obtaining

$$
\Delta_2:=\left.\Delta\right|_{S={23}/{4}}=\displaystyle{\frac{1}{16}\,\frac{2116\,{a}^{3}-8076\,{a}^{2}-17871\,a-2116}{\left(
a-4 \right) \left( 1+2\,a \right) ^{2}}}.
$$
In order to avoid the terms which are perfect squares and to have
an algebraic expression, we consider
$$
\Delta_3:=
\displaystyle{\left(\frac{4(1+2a)(a-4)}{46}\right)^2}\,\Delta_2=
(a-4)\left(a^3-\frac{2019}{529}a^2-\frac{777}{92}a-1\right),
$$
and we try to find rational values of $a$ on the quartic
$k^2=\Delta_3$, that is
\begin{equation*}
k^2=(a-4)\left(a^3-\frac{2019}{529}a^2-\frac{777}{92}a-1\right).
\end{equation*}
In order to apply Lemma~\ref{lemaxarles} to the above equation we
perform the following translation $A=a-{4135}/{2116},$ obtaining
the new elliptic quartic
\begin{equation}\label{qua}
K^2={A}^{4}-{\frac {36024561}{2238728}}\,{A}^{2}-{\frac
{38272338}{ 148035889}}\,A+{\frac
{1009624858257249}{20047612231936}},
\end{equation}
which, by Lemma \ref{lemaxarles}, is isomorphic to the elliptic
cubic
\begin{equation}\label{eqXY}
{Y}^{2}={X}^{3}-{\frac {1288423179}{71639296}}\,X+{\frac
{8775405707427}{ 303177500672}}.
\end{equation}

Consider the additive group of rational points on the elliptic
curve (\ref{eqXY}):
$$
E(\Q)=\left\{(X,Y)\in\Q\times\Q\,\mbox{ satisfying
(\ref{eqXY})}\right\} \cup\cal{O}
$$
where $\cal{O}$ is the usual point at infinity acting as a neutral
element, by using MAGMA  we have found the following rational point
 $R=(\frac{18243}{8464},\frac{81}{184})$, which lies
on the compact oval of (\ref{eqXY}), see Figure~1.
\begin{center}
\includegraphics[scale=0.30]{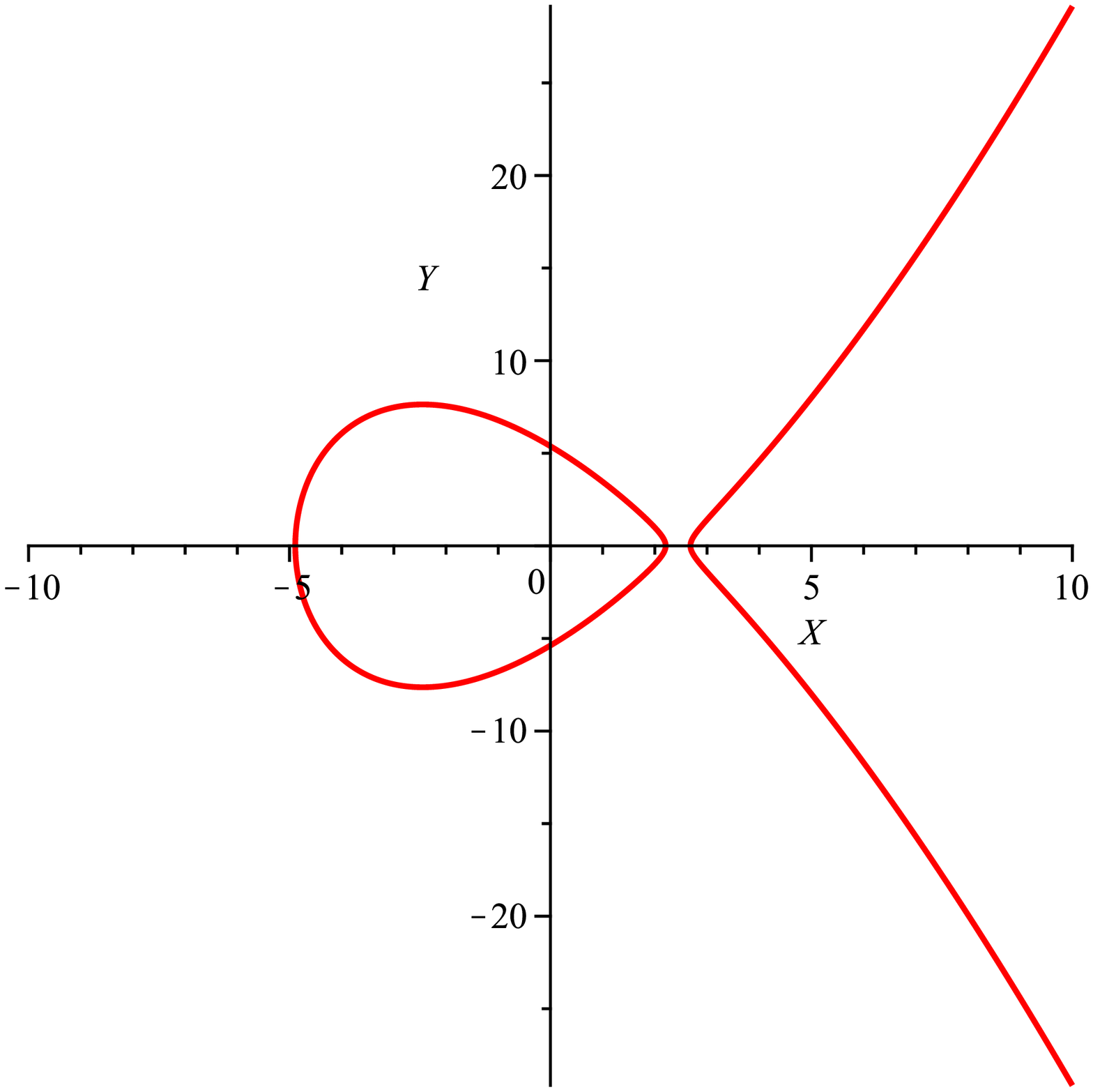}
\end{center}
\begin{center}
Figure 1: The elliptic curve ${Y}^{2}={X}^{3}-{\frac
{1288423179}{71639296}}\,X+{\frac {8775405707427}{ 303177500672}}$.
\end{center}
\begin{center}
\includegraphics[scale=0.30]{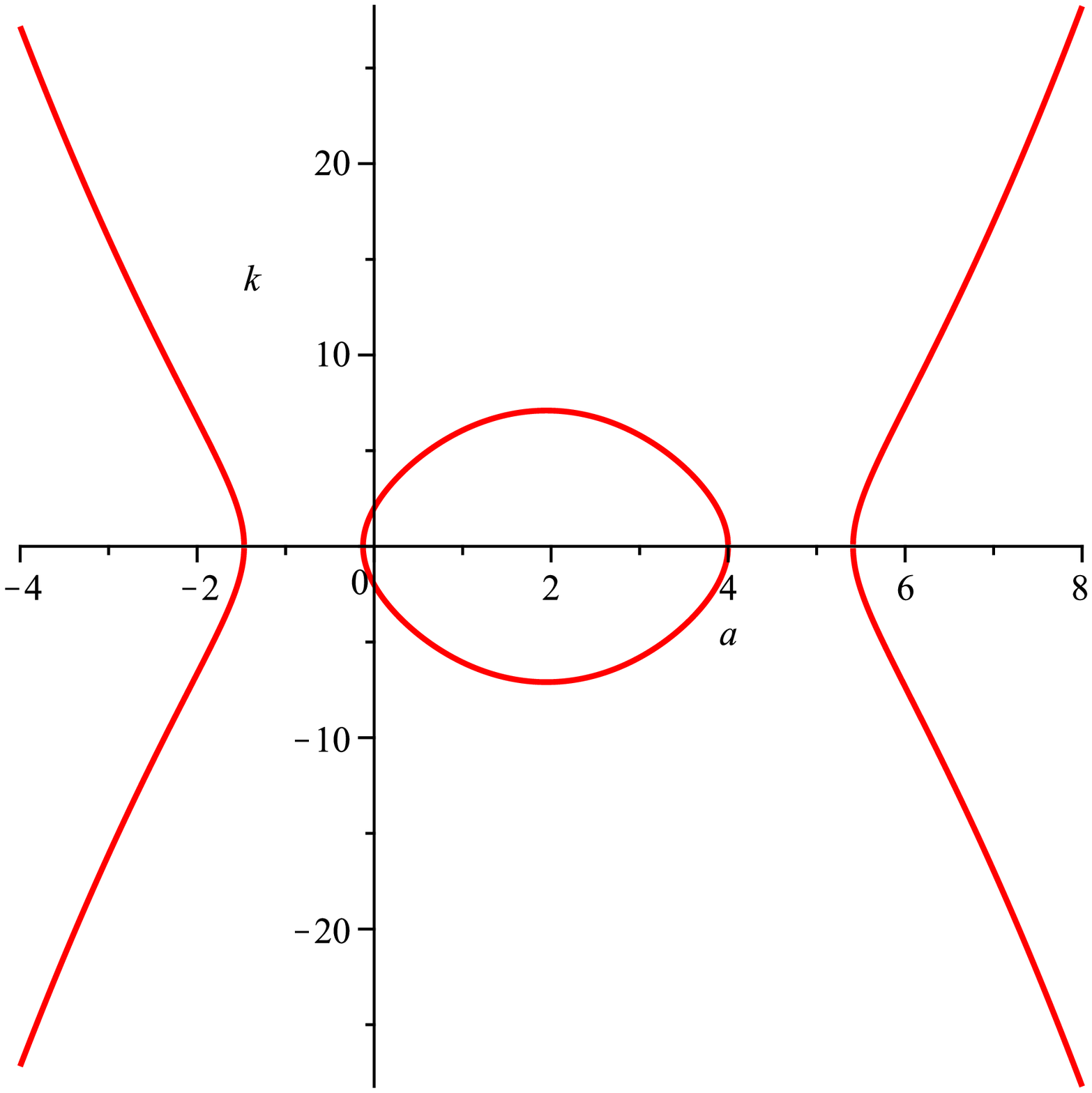}
\end{center}
\begin{center}
Figure 2: The elliptic quartic
$k^2=\Delta_3=(a-4)\left(a^3-\frac{2019}{529}a^2-\frac{777}{92}a-1\right).$
\end{center}

It is easy to check that $R$ is not in the torsion of $E(\Q).$ In
fact, $E(\Q)$ has trivial torsion. Then, as has been explained in
Section~\ref{grouplaw}, we know that the set of rational points $\{k
R\}_{k\in\Z}$, obtained by the secant-tangent chord process, fill
densely the full elliptic curve~\eqref{eqXY}. So, their images,
trough the transformations given in our proof form a dense set of
rational points on the quartic~\eqref{qua}. The corresponding
projections give a set of rational values of $a$ in
$(-\infty,a_4]\cup[a_3,a_2]\cup[a_1,+\infty)$ such that the
corresponding map $F_a$ has 9-periodic rational
points\footnote{Notice that not all the rational points of
on~\eqref{eqXY}  are good seeds for obtaining the density of values
of $a$ in $[a_1,+\infty)$ by the secant-tangent chord process. For
instance, although  the rational point
$(X,Y)=\left({23947}/{8464},{1781}/{2116}\right)$ gives
$$
(a;x(a),y(a))=\left(\frac{50025}{6344};\frac{4231448}{8351929},
\frac{175168575}{33407716}\right),
$$
which is a counterexample to Zeeman's Conjecture 2, it is not useful
for our purposes because it lies on the unbounded connected
component of \eqref{eqXY}.}. Here $a_4<a_3<a_2=4<a_1$ are the four
roots of $\Delta_3$, see Figure~2. Of course, these periodic points
can not have both coordinates positive when $a$ belongs to the first
two intervals. For instance one of two points corresponding to $R$
is
\[
(a;x(a),y(a))=\left(\frac {391}{370};\frac{28543}{4224},
-\frac{4255}{4224}\right).\]

Finally we prove that the rational points found on the
quartic~\eqref{qua} corresponding to values of $a\in[a_1,+\infty)$
give rise to positive rational 9-periodic orbits. Observe that for
$S=23/4$, the value of $P$ given by equation (\ref{eqP}) is
$$
P=\displaystyle{\frac{27}{4}\,\frac{a(4a+23)}{(a-4)(1+2a)^2}}.
$$
Hence for all values of $a\ge a_*,$ the value $P$ is positive and
therefore the corresponding values of $x$ and $y$ are also
positive. Hence the theorem follows. \qed

Notice that our proof of Theorem~\ref{teoA} searches positive
rational points $(a;x,y)$ in $S_a$, such that $x+y=23/4$. Hence a
first necessary condition for their existence is that both curves
intersect in the first quadrant of the $(x,y)$-plane. It is easy to
prove that when $a\ge a_*$ this only happens when $a\ge a_1>a_*.$

Some values of $a$ in $(a_*,a_1)$ with not large numerators and
denominators\footnote{We have obtained them by computing several
convergents of the expansion in continuous fractions of same points
in the interval.} are:
\begin{align*}
&\frac{14243}{2632},\,\frac{14335}{2649},\,\frac{18675}{3451},\,\frac{197021}{36408},\,\frac{
216459}{40000},\,
\frac{333060}{61547},\\
&\frac{11034}{2039},\,\frac {21976}{4061},\,
\frac{60641}{11206},\,\frac{96005}{17741},\,\frac{96860}{17899}.
\end{align*}
Notice that $a_1-a_*\simeq 0.21\times 10^{-5}.$ For the above values
we have not been able to find rational points on the corresponding
elliptic curve~\eqref{9p}. Nevertheless it is not difficult, by
using again MAGMA or SAGE, to compute the root numbers associated to
these elliptic curves, obtaining $-1$ for the values of the first
row and $+1$ for the ones of the second row. Hence, if one believes
that the Birch and Swinnerton-Dyer Conjecture is true (see for
example \cite{C}, Conjecture 8.1.7.), as most people do, or more
concretely, on the trueness of the so-called Parity Conjecture (see
for example \cite{C}, section 8.5), we obtain that for the first six
values the corresponding curve would have rational points (not
necessarily with positive coordinates).

\begin{nota}
The elliptic curve given by the equation \eqref{eqXY} has minimal
model
$$ Y^2 + XY + Y = X^3 - X^2 -
994154X + 376423337.$$ A computation with either SAGE or MAGMA
reveals that this curve has rank $3$. As far as we know, this is the
highest known rank for the base elliptic curve of a family of
elliptic curves with a $9$ torsion point parameterized by such
curve, see \cite{W} or \cite[App. B.5]{R}.
\end{nota}

Next, we show the non-existence of rational 9-periodic points for a
some~$F_a, a>a_*.$

\begin{propo}\label{propoc}
(i) For $a\in\{6, 8\}$ there are no initial conditions in
$\Q\times\Q$ such that the sequence~\eqref{Lynesseq} has period 9.

(ii) For $a=9$ there are no initial conditions in $\Qm\times\Qm$
such that the sequence~\eqref{Lynesseq} has period 9, but there are
infinitely many in $\Q\times\Q$.
\end{propo}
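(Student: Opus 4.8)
The plan is to reduce each assertion to a concrete statement about the Mordell--Weil group of a single explicit elliptic curve, and then to invoke a rank/torsion computation (via MAGMA or SAGE) together with the description of the Lyness action from Subsection~\ref{grouplaw}. Recall that $9$-periodic points of $F_a$ lie on the curve~\eqref{9p}, i.e.\ on $C_{a,h}$ with $h=(a-1)(a^2-a+1)/a$, and that a rational initial condition $(x_0,x_1)$ gives a $9$-periodic sequence precisely when the corresponding point $P=[x_0:x_1:1]$ lies on $C_{a,h}(\Q)$ and is not one of the nine torsion points listed at the end of Subsection~\ref{grouplaw}. Since $\Phi=\Z/9\Z$ for this curve, the existence of \emph{any} rational $9$-periodic sequence for a given $a$ is equivalent to $C_{a,h}(\Q)$ being strictly larger than its torsion, i.e.\ to the curve having positive rank; and the non-existence of such a sequence is equivalent to rank~$0$. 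So for part (i) I would substitute $a=6$ and $a=8$ into~\eqref{9p}, obtain the two explicit elliptic curves, pass (using Theorem~\ref{nft}, or directly) to a Weierstrass minimal model, and exhibit by computer algebra that each has rank~$0$; then $C_{a,h}(\Q)$ consists only of the nine torsion points, none of which lies in $\Q\times\Q$ as an affine point arising from an initial condition that is not already a lower-period point, and in particular none yields a genuinely $9$-periodic sequence. (As noted in the introduction, $a=6$ is exactly the value singled out there as giving rank~$0$, so this is consistent.)

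For part (ii) the argument splits. First, substituting $a=9$ into~\eqref{9p} gives a specific elliptic curve; I would compute with MAGMA/SAGE that its rank is positive (rank~$\ge 1$), so that $C_{9,h}(\Q)$ is infinite. By Proposition~\ref{propob}'s mechanism --- the continuous isomorphism of $E\cup\{\mathcal O\}$ with $\{e^{it}\}\times\{1,-1\}$ recalled in Subsection~\ref{grouplaw} --- the non-torsion rational points are dense in the curve, hence infinitely many of them give rational $9$-periodic sequences: this proves the second clause of (ii). Second, to show none of these lies in $\Qm\times\Qm$: by the result quoted from \cite{BR,Z} (and recalled at~\eqref{9p} and in the proof of Theorem~\ref{teoA}), a $9$-periodic point has both coordinates positive only when $a>a_*\simeq 5.411$, which is satisfied by $a=9$, so that criterion alone does not exclude positivity. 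Instead I would use the sharper necessary condition developed right after the proof of Theorem~\ref{teoA}: a positive point on the positive oval forces the auxiliary quantities $S=x+y$, $P=xy$ to satisfy~\eqref{eqP} with $\Delta=S^2-4P$ a perfect square; equivalently, the positive oval of $C_{9,h}$ must actually carry a rational point. I would argue that for $a=9$ the rational points of $C_{9,h}(\Q)$ all lie on the \emph{unbounded} connected component (the bounded oval contains no rational point), so none has both coordinates positive. Concretely this is checked by determining generators of the Mordell--Weil group for $a=9$ and observing that each generator, and hence the subgroup it generates together with torsion, misses the compact oval --- the same kind of component-tracking used in the footnote of the proof of Theorem~\ref{teoA} for the point $(23947/8464,1781/2116)$.

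The main obstacle, as usual in this circle of problems, is not conceptual but computational and verificational: producing the explicit minimal Weierstrass models for $a=6,8,9$, and certifying the rank claims --- rank~$0$ for $a=6,8$ (which requires a genuine descent, not merely a failed point search) and rank~$\ge 1$ with an explicit generator for $a=9$ --- together with the determination of which real component each rational point of $C_{9,h}$ sits on. All of these are routine for MAGMA or SAGE on curves of this size, and the rank-$0$ verifications in particular are rigorous (e.g.\ via a full $2$-descent or analytic rank bound combined with the Parity/BSD-independent $2$-Selmer computation), so the proposition follows once these finite computations are carried out and recorded.
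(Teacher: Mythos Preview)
Your proposal is correct and matches the paper's approach: identify the curves~\eqref{9p} for $a=6,8,9$ (the paper uses the Cremona labels 17670bb1, 122094bl3, 118698i1), read off rank~$0$ for $a=6,8$ and rank~$1$ for $a=9$, and for $a=9$ exhibit a generator of the free part (the paper gives $(-3/70,-1273/105)$) lying on the unbounded component, which---together with the odd-order torsion---forces every rational point off the positive oval. Your brief detour through the $S,P,\Delta$ machinery is unnecessary and slightly garbled (for any rational $(x,y)$ one has $\Delta=(x-y)^2$, automatically a square), but you correctly land on the component-tracking argument, which is exactly what the paper uses.
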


\begin{proof} Consider the elliptic curves~\eqref{9p} for $a=6,8$
and $9$. By using MAGMA we can identify them in the list of Cremona
curves (\cite{Cr}) as numbers 17670bb1, 122094bl3 and 118698i1,
respectively. From the list we know that for the first two cases the
rank is 0. Hence they have no  rational points apart of the
$9$-torsion points. In the case $a=9$ we have that the rank of the
elliptic curve is $1$, so there are infinitely many rational points
on it. But a generator of the free part is $(-3/70 , -1273/105)$,
which has not  positive coordinates. This implies that no rational
point in the elliptic curve has positive coordinates as well.
\end{proof}

The result (i) for $a=6$ of the above proposition can also be proved in
different ways. It is possible to use the 2-descent method, or, even
better, since our curves have $9$-torsion points and, hence,
$3$-torsion points, one can instead use the descent via 3-isogeny
method. It is also possible to use an indirect method: showing that
the value of its $L$-function at $s=1$ is not zero. Then, by the
known true cases of the Birch and Swinnerton-Dyer Conjecture (see
\cite{C}, Theorem 8.1.8), we can deduce its rank is 0. By using
MAGMA or SAGE, one shows that $L(E,1)\simeq 6.10077623$, and hence
$E$ has rank $0$. The same argument applies for $a=8.$

By using Theorem~\ref{nft}, together with the known example of an
elliptic curve of rank $4$ with a torsion point of order $9$, see
\cite{W},  we get an example of rank $4$ for the curves $C_{a}$.
This curve gives also a counterexample of Zeeman's Conjecture~2 with
many (positive) rational periodic orbits.

\begin{corol} For $a=408/23$, the curve $C_{a}$ has rank $4$ over
$\Q$, that is $C_a{(\Q})\cong\Z^4\times \Z/9\Z.$ Moreover the
non-torsion points  are  generated by
\begin{align*}
&\left(\frac{15708}{38617}, \frac{1275}{4346}\right),
\left(\frac{117348775936}{1130069373},
\frac{17875982344}{22803541107}\right),\\
&\left(\frac{-5313}{5186},
\frac{199644}{17}\right),\left(\frac{96539240}{980237},
\frac{892914}{1232041}\right).
\end{align*}
\end{corol}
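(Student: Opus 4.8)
The plan is to invoke Theorem~\ref{nft} (the Lyness normal form) to transport a known high-rank example into the Lyness family. Since the theorem states that every elliptic curve over $\Q$ with a rational point of order $9$ is isomorphic (over $\Q$, with the zero and the order-$9$ point going to $\cal O$ and $Q$ respectively) to a unique curve $C_{a,h}$ with $a,h\in\Q$, and since for a point of order exactly $9$ the relation between $a$ and $h$ is forced to be $h=(a-1)(a^2-a+1)/a$ (as computed in Subsection~\ref{grouplaw}), one only needs a single $a\in\Q$. So the first step is to take the elliptic curve over $\Q$ with torsion $\Z/9\Z$ and rank $4$ exhibited in \cite{W}, write it in Tate normal form $Y^2+(1-c)XY-bY=X^3-bX^2$ with the order-$9$ point at $(0,0)$, and read off $c,b\in\Q$.

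Next I would apply the explicit dictionary from the proof of Theorem~\ref{nft}: $h=-b/c^2$ and $a=(c^2+c-b)/c^2$. This produces the value $a=408/23$ (and the matching $h$, which must automatically satisfy $h=(a-1)(a^2-a+1)/a$ since the point has order exactly $9$; this is a consistency check worth performing). The change of variables $X=-\tfrac{b}{c+1}z$, $Y=-\tfrac{bc}{c+1}(y+z)$, $Z=-\tfrac{c}{c+1}(x+y)-z$ from that proof then gives an explicit isomorphism between the \cite{W} curve and $C_{408/23,h}$; pushing the four generators of the free part through this isomorphism (in affine coordinates $x=X/Z$, $y=Y/Z$) yields the four listed points. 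Because the isomorphism is defined over $\Q$ and is a group isomorphism, it carries a basis of the Mordell--Weil group to a basis, so the images generate the non-torsion part and $C_{408/23}(\Q)\cong\Z^4\times\Z/9\Z$.

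The remaining point is to verify that this value of $a$ and the resulting points are genuinely as claimed, and in particular that the rank is exactly $4$ and not larger; this is inherited from the cited example, but to be self-contained one can simply re-run MAGMA or SAGE on $C_{408/23,h}$ (equivalently on its minimal model) to confirm the torsion is $\Z/9\Z$ and the rank is $4$, and to confirm the four displayed points are independent of infinite order. I would also note, as the corollary does, that since $a=408/23>a_*\simeq 5.411$, the curve $C_a$ has a positive oval, and (translating via the $S=x+y$, $P=xy$ substitution used in the proof of Theorem~\ref{teoA}, or directly) the non-torsion points whose Lyness orbits land in $\Qm\times\Qm$ give a counterexample to Zeeman's Conjecture~2 with arbitrarily many positive rational $9$-periodic orbits, by Proposition~\ref{propob}.

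The main obstacle is not conceptual but bookkeeping: keeping track of the several changes of variables (Tate normal form placement of the order-$9$ point, the $(c,b)\leftrightarrow(a,h)$ relations, the projective substitution into $C_{a,h}$, and de-homogenizing) so that the four generators are transported correctly and land at exactly the stated affine points. There is also the minor subtlety that the isomorphism in Theorem~\ref{nft} is pinned down by where it sends the order-$9$ point, so one must use the \emph{same} order-$9$ point throughout (rather than one of its multiples) to land in the curve with $a=408/23$ rather than with some other admissible value; once that is fixed, everything is mechanical and the verification is a short computer-algebra computation.
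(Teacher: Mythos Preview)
Your proposal is correct and follows essentially the same approach as the paper: the corollary is obtained by applying Theorem~\ref{nft} to transport the known rank-$4$ curve with a $9$-torsion point from \cite{W} into the Lyness family via the explicit $(b,c)\leftrightarrow(a,h)$ dictionary, and then carrying the four free generators through the change of variables. If anything, you have spelled out more of the bookkeeping than the paper does.
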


\section{Period $\mathbf 5$ points}\label{periode5}

When $a=1,$
the map $F_1(x,y)=(y,(1+y)/x)$ is the celebrated, globally
5-periodic, Lyness map. We define $E_h:=C_{1,h}$ as the family of
elliptic curves
\[
E_{h}:=\{(x+1)(y+1)(x+y+1)-h\,xy\,=\,0\}.
\]
It is clear that all them are filled of 5-periodic orbits. Following
\cite{BR} we know the torsion of the group of rational points of
each of the curves, $E_h(\Q)$ is either ${\Z}/{5\Z}$ or
${\Z}/{10\Z}$. In that paper it is proved that $(1,4)\in E_{15}$ or
that $(1,3)\in E_{40/3}$ and that they are not in the corresponding
torsion subgroups. Hence the corresponding elliptic curves have
infinitely many positive rational points and moreover the ranks of
both groups are greater or equal than $1$. In \cite[Prob. 1]{BR} the
authors ask the following question: Is the rank of every $E_h(\Q)$
for $h\in\Q,$ $h>h_c^+$ always positive? Here
$h_c^+=(11+5\sqrt{5})/2\simeq 11.09$ is the value where $E_h$ starts
to have points in the first quadrant. Next result proves that the
answer to the above question is negative.

\begin{propo}\label{propod}
There exist rational values of $h,$ $h>h_c^+$, such that $E_{h}(\Q)$
has rank $0$.
\end{propo}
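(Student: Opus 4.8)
The plan is to exhibit a single explicit rational value of $h>h_c^+$ for which one can verify that $E_h(\Q)$ has rank $0$. First I would use Theorem~\ref{nft}: the curve $E_h=C_{1,h}$ is, for $a=1$, an elliptic curve with a point of order $5$ (indeed $\Phi$ is $\Z/5\Z$ or $\Z/10\Z$ by \cite{BR}), and via the explicit normal-form relations $h=-b/c^2$, $a=(c^2+c-b)/c^2$ from the proof of Theorem~\ref{nft} (here $a=1$ forces $b=c$, giving the one-parameter family), I can write down a concrete Weierstrass model over $\Q$ and feed it to MAGMA or SAGE. Then I would pick a value of $h$ just slightly larger than $h_c^+=(11+5\sqrt5)/2\simeq 11.09$ — for instance a small-height rational like $h=12$ or $h=112/9$ or similar — compute the minimal model, and have the software return the rank. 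The statement only asserts existence, so a single successful value suffices.

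The key steps, in order, are: (1) translate $E_h$ into Cremona/Weierstrass form using Theorem~\ref{nft} and Subsection~\ref{grouplaw}, keeping the torsion point $Q$ tracked so the computed curve genuinely has $\Z/5\Z$ (or $\Z/10\Z$) torsion; (2) impose $h>h_c^+$ so that $E_h$ has an oval in the first quadrant and check numerically that the chosen $h$ lies in this range; (3) run a rank computation (2-descent, or descent via the 5-isogeny coming from the $5$-torsion, or an analytic argument showing $L(E_h,1)\ne 0$ together with the proven cases of Birch and Swinnerton-Dyer as in \cite{C}, Theorem 8.1.8) to certify $\mathrm{rank}=0$; (4) conclude that the only rational points on $E_h$ are the finitely many torsion points, hence $E_h(\Q)$ has rank $0$, answering the question of \cite[Prob.~1]{BR} in the negative. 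Because the oval (the bounded component, which by the last remark of Subsection~\ref{grouplaw} never contains $\mathcal O$) then contains no rational point, one also gets the geometric consequence that there are invariant ovals of $F_1$ without rational points.

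The main obstacle is step~(3): certifying rank $0$ rather than merely failing to find points. A naive point search can never prove the rank is $0$. The cleanest route is to use the descent machinery already available (the curve has a rational $5$-isogeny, so a descent via this isogeny is feasible and avoids large number fields), or, failing an unconditional descent, to compute $L(E_h,1)$ numerically, observe it is bounded away from $0$, and invoke the Kolyvagin–Gross–Zagier theorem (the proven case of BSD in analytic rank $0$) to deduce $\mathrm{rank}\,E_h(\Q)=0$ unconditionally. Once rank $0$ is established for the chosen $h$, the proposition is immediate; the only arithmetic care needed is to confirm the chosen $h$ really exceeds $h_c^+$, which is an elementary inequality.
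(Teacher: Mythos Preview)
Your proposal is correct and follows essentially the same approach as the paper: exhibit a single rational $h>h_c^+$ and certify that $E_h(\Q)$ has rank~$0$ by a machine computation. The paper does exactly this with $h=13$, identifying $E_{13}$ in Cremona's tables as curve 325e1 (rank~$0$, torsion $\Z/5\Z$); your suggested value $h=12$ also works (it appears in the remark following the proposition, with rank~$0$ and torsion $\Z/10\Z$).
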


\begin{proof} We use the same method  that in the proof of
Proposition~\ref{propoc}. Consider $E_h$  with $h=13.$ Its
corresponding number in Cremona's list (\cite{Cr}) is 325e1,
obtaining rank 0 and torsion $\Z/5\Z$. Hence the result follows.
\end{proof}

\begin{nota}
(i) Arguing as in the previous proof, for  the values $h=12,14$ and
$15$ we obtain ranks 0,0 and 1,  and torsions $\Z/10\Z,$ $\Z/5\Z$
and $\Z/5\Z$, respectively. In fact, in the $100$ first integers
values of $h>11$, there are $48$ values with rank 0 (and $43$ values
with rank 1, and $9$ values  of rank 2).

(ii) For the case $h=12$, one can also use the existence of a point
with 2-torsion to give an easy  proof that the rank of $E_{12}(\Q)$
is 0, by using the descent  via 2-isogeny method.

(iii) Recall that the Lyness map can be written as $P\rightarrow
P+Q$, where $Q=[1:0:0]$ and this point has 5-torsion. Thus  when the
initial point $P_0$ has 2-torsion we obtain that
\[
P_0+(2m+1)Q=(2m+1)(P_0+Q),\quad m\ge0
\]
and so the odd multiples of $P_0+Q$ by the secant--tangent chord
process coincide with the points generated by the Lyness map.

(iv) The case $h=12$  is also interesting because, although the rang
of $E_{12}$ is zero, there is a unique rational (integer) orbit
corresponding to half of the torsion points. This orbit is given by
the initial conditions $x_0=1$, $x_1=1$ and is
\[
1,1,2,3,2,1,1,\ldots
\]
As it is explained in item (iii), this situation happens because the
point $(1,1)$ has 2-torsion.

(iv) By using once more the universality of the level sets of the
Lyness curves $C_{a,h}$ and the results given in \cite{W}, where
four examples of elliptic curves with torsion $\Z/5\Z$ and rank 8
appear, we have obtained the following values of $h$:
\[
\frac{2308482}{11325881},\quad\frac{283551345}{1294328864},\quad
\frac{2489872}{6474845},\quad\frac{9645545}{74914011}
\]
(and also the ones corresponding to $-1/h$) for which the rank of
$E_h$ is $8$. Since all these values are in $(0,1)$ none of them
satisfies $h>h_c^+.$

(v) By using the same method than in the previous item we know
 that $E_{\tilde h}$ and $E_{-1/\tilde h}$ for $\tilde
 h=12519024/498355\simeq 25.12>h_c^+$ have rank 7.
\end{nota}

\section{Single Lyness recurrences with many periods}

Looking at Theorem~\ref{teo} it is natural to wonder if there exists
some rational  fixed value of $a$ for which there are rational
initial conditions such that the corresponding sequences generated
by \eqref{Lynesseq} has all the allowed periods
$\{1,2,3,5,6,7,8,9,10,12\}$ given the theorem. It is easy to see
that the answer is no, because period 5 (resp. 6) only happens when
$a=1$ (resp. $a=0$). Moreover we will prove in next proposition that
rational sequences with prime periods $1$ and $12$ or $2$ and $12$
can not coexist for a given value of $a$. On the other hand, as it
is shown in Table~2, for each $a\ne1$, the point $(-1,-1)$ always
gives rise to a 3-periodic orbit for $F_a$.

Hence the interesting  problem is to  find values of $a\in\Q$ such
that $F_a$ has one of the following sets of prime periods realized
for rational periodic points:

\[\mathcal{P}_1=\{1,2,3,7,8,9,10\}\quad \mbox{or}
\quad\mathcal{P}_2=\{3,7,8,9,10,12\}.\]

Table~2 shows that for $a=21/37$ the set of periods of $F_a$ is
$\mathcal{P}_2$. The search of rational points with periods $9$ and
$10$ is quite involved. We give some details of how we have obtained
them in the end of this section.

This table also shows that for the integer value $a=20$ the set of
achieved periods is $\mathcal{P}_1\setminus\{2\}$ and it is easy to
see that period 2 does not appear. To find   explicit rational
values $a$, candidates to have  as set of periods with rational
entries the full set $\mathcal{P}_1,$ is not very difficult. First
we give a rational parametrization of the values of $a$ for which
rational fixed and period 2-points appear. After, among these
values, we search for ``small'' values for which the root numbers of
the elliptic curves corresponding to the points of period $7,8,9$
and $10$ is $-1$. We have obtained the list
\[
\frac{88401}{18496},\quad
\frac{136353}{23104},\quad\frac{139971}{36100},\quad\frac{9633}{12544},\quad
\frac{17301}{19600},\quad
\frac{157521}{92416},\quad\frac{31003}{39204}.
\]
For all these values of $a$,  if one believes once more in the
trueness of the Birch and Swinnerton-Dyer Conjecture or of the
Parity Conjecture, the set of periods of the corresponding $F_a$
should be $\mathcal{P}_1$. Unfortunately for none of these values of
$a$ we have  been able to find  explicitly periodic points of all
the periods.

\vspace{0.2cm}

\begin{center}
\begin{tabular}{|c||c|c|}
  \hline
 Period &$(x_0,x_1)$ for  $a=20$ &$(x_0,x_1)$ for   $a=21/37$ \\
 \hline
  1 & $(5,5)$ & - \\
 2 & - & - \\
 3 & $(-1,-1)$ & $(-1,-1)$ \\
 7 & $(-\frac{11}3, -\frac{35}{32})$& $(\frac{455}{1679}, -\frac{9394}{6693})$\\
 8 & $(-\frac{95}2, -\frac{31}{12})$ & $( \frac{221}{14}, -\frac{645}{658})$ \\
 9 &$(\frac{5}{166}, -\frac{95}{12})  $ &
$(-\frac{2719003411664}{4342282089993},
\frac{25886110233337}{102273997737527} )$\\
 10 & $(-\frac{60905}{253889}, -\frac{5756625}{291104}) $
 & $(  \frac{1657822032572550308388507}{4355431052669166166335275},
 -\frac{1803238432370002727833401}{2680435796120980996248701})$ \\
 12 & - & $( -\frac{51}{35}, -\frac{32}7)$\\
  \hline
\end{tabular}
\vspace{0.5cm} \nobreak\\ Table 2. Values of $a$ and rational
initial conditions for the recurrences~\eqref{Lynesseq}\\ with
periodic sequences of several periods.
\end{center}

\vspace{0.2cm}

\begin{propo}\label{incomp} Given $a\in\Q,$ the Lyness map $F_a$ has
not simultaneously rational periodic points with prime periods 1 and
12, or 2 and 12.
\end{propo}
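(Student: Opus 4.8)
The plan is to exploit the explicit descriptions of the parameters $a$ giving rise to $1$-periodic, $2$-periodic and $12$-periodic rational sequences, and to show that the corresponding conditions on $a$ are incompatible. From Table~1 (and Remark~\ref{taula}(i)) the existence of a rational fixed point forces $a$ to lie in a specific locus, and similarly for period $2$; in both cases the cleanest way to pin this down is to go back to the group-law picture of Subsection~\ref{grouplaw}. Recall that a rational periodic orbit of prime period $n$ exists if and only if $Q$ lies in the torsion of $\tilde C_{a,h}(\Q)$ with exact order $n$ for some rational $h$. So the strategy is: first characterise, in terms of $a$ alone, when $\tilde C_{a,h}$ can have a rational point of order $12$ with $Q$ being a point of order $1$ or $2$ on a (possibly different) level curve; then observe these cannot both happen.

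First I would handle period $1$. A fixed point of $F_a$ is $((1\pm\sqrt{4a+1})/2,(1\pm\sqrt{4a+1})/2)$, so a \emph{rational} fixed point exists iff $4a+1$ is a perfect square in $\Q$, i.e. $a = \tfrac{u^2-u}{?}$; more precisely writing $4a+1 = (2u-1)^2$ gives $a = u^2-u$ with $u\in\Q$, matching Table~1. For period $2$, the $2$-periodic points of $F_a$ satisfy $F_a^2=\mathrm{id}$ but $F_a\neq\mathrm{id}$; a short computation (or the parametrisation in Table~1) shows a rational $2$-periodic orbit exists iff $a = u^2+u+1$ for some $u\in\Q$, equivalently iff $4a-3$ is a rational square. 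Next I would produce the analogous constraint for period $12$: impose $12Q=\mathcal O$ with $Q$ of exact order $12$ using the formulas for $kQ$ in Subsection~\ref{grouplaw} (the relation $6Q=-6Q$, say), which expresses $h$ rationally in $a$ and, crucially, forces an auxiliary quantity to be a rational square — concretely, carrying out the resultant computation of $F_a^{12}=\mathrm{id}$ restricted to the order-$12$ component yields a condition of the form ``$D(a)$ is a square in $\Q$'' for an explicit polynomial $D$.

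The heart of the argument is then the incompatibility: if $a=u^2-u$ (resp. $a=u^2+u+1$) \emph{and} $D(a)$ is a square, one substitutes and is left with a curve in $u$ whose rational points must be shown to be only the ``trivial'' ones (corresponding to degenerate $h\in H$, i.e. the non-elliptic levels excluded by $h\neq 0,a-1,h_c^\pm$, or to periods strictly dividing $12$). I expect this reduces to checking that a certain explicit hyperelliptic or elliptic curve over $\Q$ has no rational points beyond the obvious ones — either by a genus argument (if the curve has genus $\ge 2$, invoke finiteness and a direct search/ local obstruction) or, if it is elliptic, by identifying it in Cremona's tables via MAGMA and reading off that its rank is $0$ with only the trivial torsion, exactly as done in Proposition~\ref{propoc} and Proposition~\ref{propod}. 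The main obstacle is precisely this last step: extracting the period-$12$ square condition $D(a)$ in a usable form and then showing the resulting diophantine equation in $u$ has no nontrivial rational solutions; everything before it is routine elimination using the material already in Subsection~\ref{grouplaw}. I would organise the write-up so that the period-$1$/period-$12$ case and the period-$2$/period-$12$ case are treated in parallel, since the two resulting curves differ only by the substitution $a=u^2-u$ versus $a=u^2+u+1$.
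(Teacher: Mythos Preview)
Your plan is correct and matches the paper's proof in all essentials: characterise period~$1$ by $4a+1\in(\Q^\times)^2$, period~$2$ by $4a-3\in(\Q^\times)^2$, pin down the period-$12$ locus, intersect, and finish by identifying the resulting genus-one curve in Cremona's tables to see that only degenerate points survive.

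The one organisational difference worth noting is the choice of primary parameter. You propose to parametrise the period-$1$ (resp.\ period-$2$) locus by $u$ via $a=u^2-u$ (resp.\ $a=u^2+u+1$) and then impose a ``$D(a)$ is a square'' condition coming from period $12$. The paper does it the other way round: it uses the explicit rational parametrisation of the order-$12$ locus,
\[
a=\frac{2t(1+t)}{3t^2+1},\qquad h=-\frac{(t-1)^2(t^2+1)}{t(1+t)(3t^2+1)},\qquad t\ne 0,\pm1,
\]
obtained from the $kQ$ formulas (equivalently from the Tate normal form), and then asks when $4a(t)+1$ (resp.\ $4a(t)-3$) is a square. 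This yields directly the quartics $z^2=(3t^2+1)(11t^2+8t+1)$ and $z^2=(3t^2+1)(-t^2+8t-3)$, which are Cremona curves 15a8 and 39a4 with only the trivial rational points. Your route gives birationally the same curves, so nothing is lost; but the paper's ordering avoids having to extract the period-$12$ condition as a square condition on $a$ alone, which is the one step you left as ``I expect''. In practice their choice makes the elimination shorter, since the $X_1(12)$-style parametrisation is already on the shelf.
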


\begin{proof} From the results of Subsection~\ref{nonelli} we know
that these couple of periods do not coexist when the corresponding
curves $C_{a,h}$ are not  elliptic curves. So, from now one we can
assume that the 12 periodic points lie on an elliptic curve
$C_{a,h}$. First of all, given $a\in \Q$, it is easy to obtain that
there is a rational periodic point with period 1 if and only if
$4a+1$ is a square in $\Q$ and that there is one with period 2 if
and only if $4a-3$ is a square in $\Q$. By using the expressions of
$k Q$ given in Section 2.1 (or by using the known results for the
Tate curve \cite{AM}), we get that the explicit parameterization of
the values of $a$ and $h$ for which $Q$ can have order 12 is
\[
a=\frac{2t(1+t)}{3t^2+1} \quad\mbox{and,}\quad
h=-\frac{(t-1)^2(t^2+1)}{t(1+t)(3t^2+1)},\] for some $t\ne 0,\pm1$.
Hence, to have a rational periodic points with periods 1 and 12, we
need rational numbers $t$ such that $ \frac{8t(1+t)}{3t^2+1}+1$ is a
square in $\Q$. We will show that this only happens for the values
$t=0$ and $1$, which do not give prime period 12. Multiplying by
$(3t^2+1)^2$, we need to search for rational solutions of the
equation $z^2=(3t^2+1)(11t^2+8t+1)$. A standard change of variables,
similar to the one used used in Section~\ref{periode9}, shows that
this genus one curve is isomorphic to the elliptic curve with
corresponding number in the Cremona's list equal to 15a8, which has
only four rational points. These points correspond to the points
with $t=0$ and $t=-1$.

Similarly, to have  rational periodic points with periods 2 and 12,
we need rational numbers $t$ such that $ \frac{8t(1+t)}{3t^2+1}-3$
is a square in $\Q$. Arguing  as in the previous case we arrive to
the equation $z^2=(3t^2+1)(-t^2+8t-3)$ which number in Cremona's
list is 39a4 and has only the two rational points corresponding to
$t=1$. So the result follows.
\end{proof}

\subsection{Searching 9 and 10 rational periodic points for
$F_{21/37}$}

In order to find 9 or 10   rational periodic points for $F_{21/37}$,
one cannot just naively search for points, since their coordinates
are too big. So we use the following strategy. First, with the same
formulas  that in the proof of Theorem~\ref{nft}, we transform the
equations $C_{21/37,h},$ for  their corresponding
$h=-\frac{16528}{28749}$ and $h=-\frac{296}{609},$ to a Weierstrass
equation. We need to find non-torsion points on these elliptic
curves. Since the Weierstrass equations have too big coefficients,
we apply $2$-descent procedure with MAGMA in order to get an
equivalent quartic equation with smaller coefficients for the
corresponding elliptic curves. We get that they are equivalent
respectively to
$$y^2 = -57376476x^4 + 66683940x^3 + 800552377x^2 - 118125576x +
209901456$$ and
$$ y^2 = -7734191x^4 + 116312038x^3 + 178646017x^2 - 246594696x - 138820464.$$
These are still not sufficient simple to be able to find
(non-torsion) rational points, so we do another transformation. In
the first case, we apply $4$-descent in order to obtain another
form, this case as intersection of two quadrics (in the projective
space). We get the equations
$$ 15X^2 + 104XY - 16Y^2 + 12XZ - 42YZ + 18Z^2 +
62XT - 14YT - 8ZT + 22T^2 = 0, $$ $$123X^2 - 460XY - 233Y^2 - 24XZ -
398YZ - 153Z^2 + 122XT - 320YT + 688ZT + 321T^2=0. $$ Finally, an
easy search finds the point given in projective coordinates by
$[-\frac{10}{21} : \frac9{14} : \frac{19}6 : 1]$. Using the
transformation rules given by MAGMA one gets the corresponding point
in $C_{21/37,h}$, shown in Table~2. For the second equation,
corresponding to the 10 rational periods, we directly transform the
quartic equation to an intersection of two quadrics, and then  we
apply an algorism due to Elkies (\cite{E}) to search for rational
points in this type of curves (as implemented in MAGMA).

\section*{ Acknowledgements} The authors are partially supported by
MCYT through grants MTM2008-03437 (first author),
DPI2008-06699-C02-02 (second author) and MTM2009-10359 (third
author). The  authors are also supported by the Government of
Catalonia through the SGR program.


\begin{thebibliography}{9}

\bibitem{AM} A.\,O.\,L. Atkin, F. Morain. {\sl Finding suitable curves for the
elliptic curve method of factorization}, Math. Comp. 60 (1993), no.
201,  399--405.


\bibitem{BGS} E. Barbeau, B. Gelbord and S. Tanny.
 {\sl Periodicities of solutions of the generalized Lyness
recursion},  J. Difference Equations Appl. 1 (1995), 291--306.

\bibitem{BR} G. Bastien, M. Rogalski. {\sl Global behavior of the
solutions of Lyness' difference equation $u_{n+2}u_n=u_{n+1}+a$}, J.
Difference Equations Appl. 10 (2004), 977--1003.

\bibitem{Beau} A. Beauville. {\sl Les familles stables de courbes elliptiques sur P1
admettant quatre fibres singuli\`{e}res}, C. R. Acad. Sci. Paris, S\'{e}rie
I 294 (1982), 657--660.


\bibitem{BC} F. Beukers, R. Cushman. {\sl Zeeman's monotonicity conjecture},
J. Differential Equations 143 (1998), 191--200.



\bibitem{BCP} W. Bosma, J. Cannon and C. Playoust. {\sl The Magma algebra
system. I. The user language,} J. Symbolic Comput. 24 (1997),
235--265.





\bibitem{C} H.~Cohen. ``Number Theory'' Volume I: Tools and Diophantine Equations.
Springer, GTM 239, 2007.

\bibitem{Cr} Web page ``Elliptic Curve Data" by J. E. Cremona,
 University of Warwick,
 \texttt{http://www.warwick.ac.uk/staff/J.E.Cremona//ftp/data/}

\bibitem{W} Web page sith ``Elliptic curve tables", mantianed by A. Dujella, University of Zagreb,
\texttt{http://web.math.hr/$\sim$duje/}

\bibitem{E} N. Elkies, {\sl
Rational Points Near Curves and Small Nonzero |x3 - y2| via Lattice
Reduction}, pp. pages 33--63. In ANTS IV, volume 1838 of LNCS.
Springer-Verlag, 2000.



\bibitem{ER} J. Esch and T.\,D. Rogers. {\sl The screensaver map: dynamics on elliptic
curves arising from polygonal folding}, Discrete Comput. Geom. 25
(2001), 477--502.

\bibitem{Hu} D. Husemoller. ``Elliptic curves". With an appendix by Ruth Lawrence.
Graduate Texts in Mathematics, 111. Springer-Verlag, New York, 1987.


\bibitem{JRV} D. Jogia, J.\,A.\,G. Roberts, F. Vivaldi, {\sl An algebraic
geometric approach to integrable maps of the plane}, J. Phys. A
39, 1133--1149 (2006).

\bibitem{NZM} I. Niven, H.S. Zukerman, H.L. Montgomery. ``An introduction to
the theory of numbers'' (Fifth ed). John Wiley,  New York, 1991.


\bibitem{R} F.\,P. Rabarison. ``Torsion et rang des courbes elliptiques d\'{e}finies
sur les corps de nombres alg\'{e}briques'', Th\`{e}se de doctorat,
Universit\'{e} de Caen, 2008.


\bibitem{Sage}  W. Stein et al., Sage: {O}pen {S}ource {M}athematical {S}oftware ({V}ersion 4.0), The  Sage~Group, 2009,
\texttt{http://www.sagemath.org/}

\bibitem{S0} J. Silverman. ``Advanced topics in the arithmetic of elliptic curves''.
Graduate Texts in Mathematics, 151. Springer-Verlag, New York, 1994.

\bibitem{S} J. Silverman. ``The Arithmetic of Elliptic Curves'' (2nd Ed.).
Graduate Texts in Mathematics, 106. Springer, Dordrecht, 2009.



\bibitem{ST} J. Silverman, J. Tate. ``Rational Points on Elliptic
Curves''. Undergraduate Texts in Mathematics.  Springer, New York,
1992.

\bibitem{O}  J.\,M.\,H. Olmsted. {\sl Rational values of trigonometric functions}, Amer. Math. Monthly
52 (1945), 507--508.


\bibitem{Z} E.\,C. Zeeman. {\sl Geometric unfolding of a difference equation},
Hertford College, Oxford (1996). Unpublished paper. A video of the
distinguished lecture, with the same title, at PIMS on March 21,
2000, is aviable at:
\texttt{http://www.pims.math.ca/resources/multimedia/video}. The
slides can be obtained at:
\texttt{http://zakuski.utsa.edu/$\sim$gokhman/ecz/gu.html}
\end{thebibliography}
\end{document}